\documentclass[12pt,a4paper,openany]{article}

\setlength{\textheight}{23cm}
\setlength{\textwidth}{16cm}
\setlength{\oddsidemargin}{0cm}
\setlength{\evensidemargin}{0cm}
\setlength{\topmargin}{0cm}

\usepackage{amsmath,amssymb,amsfonts,amsthm}
\usepackage{graphics}
\usepackage{mathrsfs}
\usepackage[all]{xy}
\usepackage{color}

\newtheorem{thm}{Theorem}[section]
\newtheorem{prop}[thm]{Proposition}
\newtheorem{lemma}[thm]{Lemma}

\newtheorem{cor}[thm]{Corollary}

\newtheorem{conj}[thm]{Conjecture}

\theoremstyle{definition}
\newtheorem{definition}[thm]{Definition}
\newtheorem{rem}[thm]{Remark}

\newtheorem*{ack}{Acknowledgment}

\newcommand{\sq}{\hfill $\square$}
\newcommand{\kah}{K\"{a}hler}
\newcommand{\dol}{\sqrt{-1}\partial \overline{\partial}}

\newcommand{\hol}{H\"{o}lder}
\newcommand{\ma}{Monge-Amp$\grave{{\rm e}}$re}

\makeatletter
\def\address#1#2{\begingroup
\noindent\parbox[t]{16cm}{%
\small{\scshape\ignorespaces#1}\par\vskip1ex
\noindent\small{\itshape E-mail address}%
\/: #2\par\vskip4ex}\hfill%
\endgroup}%
\makeatother

\markright{}

\makeatletter

\@addtoreset{equation}{section}
\makeatother


\title{A conical approximation of constant scalar curvature {\kah} metrics of Poincar\'{e} type}
\author{Takahiro Aoi}
\date{\today}
\begin{document}
\maketitle

\begin{abstract}
Let $(X,L_X)$ be a polarized manifold and $D$ be a smooth hypersurface such that $D \in | L_X |$.
In this paper, we show that if there is no nontrivial holomorphic vector field on $D$ and ${\rm Aut}_0 ((X,L_X); D)$ is trivial, then constant scalar curvature {\kah} metrics of Poincar\'{e} type on $X \setminus D$ can be approximated by constant scalar curvature {\kah} metrics with cone singularities of sufficiently small angle along $D$.
This result implies log K-semistability of $((X,L_X);D)$ with angle 0.
\end{abstract}


\section{Introduction}

In {\kah} geometry, the existence of {\bf constant scalar curvature {\kah} (cscK) metrics} is a fundamental problem.
There are many works on the existence of cscK metrics on the complement of a divisor on a compact {\kah} manifold.
In particular, Auvray have studied cscK (more generally, extremal {\kah}) metrics of Poincar\'{e} type, i.e., with cusp singularities \cite{Au1,Au2,Au3}.
On the other hand, Guenancia \cite{Gu} and Biquard-Guenancia \cite{BG} showed that some complete {\kah}-Einstein metric can be realized as the limit of a sequence of {\kah}-Einstein metrics with cone singularities along a divisor.
As an analogue of Guenancia's result \cite{Gu} for cscK metrics, we prove the following theorem in this paper.

\begin{thm}
\label{cone to cusp}
Let $(X,L_X)$ be an $n$-dimensional polarized manifold with a smooth hypersurface $D \in | L_X |$.
Assume that there is no nontrivial holomorphic vector field on $D$ and ${\rm Aut}_0 ((X,L_X); D)$ is trivial.
If $X \setminus D$ admits a cscK metric $\omega_{0}^{cscK} \in c_1 (L_X)$ of Poincar\'{e} type, then there exists a cscK cone metric $\omega_{\beta}^{cscK} \in c_1(L_X)$ of cone angle $2 \pi \beta$ along $D$ for any $0<\beta \ll 1$.
Moreover, $\omega_{\beta}^{cscK}$ converges to $\omega_{0}^{cscK}$ as $\beta \to 0$, in the sense of topology of $C^{4,\alpha}_{\eta}(X \setminus D)$ for some $-1 \ll \eta <0$.
\end{thm}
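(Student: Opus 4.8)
The plan is to recast the cscK condition as a nonlinear fourth-order elliptic equation for a \kah\ potential and to solve it by a quantitative inverse function theorem in the weighted \hol\ spaces $C^{k,\alpha}_{\eta}(X\setminus D)$ adapted to the Poincar\'{e}-type geometry of $\omega_{0}^{cscK}$. Writing $S(\cdot)$ for the scalar curvature and $\bar{S}_\beta$ for the cohomologically determined average scalar curvature of a cone metric of angle $2\pi\beta$ in $c_1(L_X)$, I would fix a family of reference metrics $\omega_\beta\in c_1(L_X)$ with cone angle $2\pi\beta$ along $D$ and seek $\varphi_\beta\in C^{4,\alpha}_{\eta}$ solving
\[
F_\beta(\varphi_\beta):=S\big(\omega_\beta+\dol\,\varphi_\beta\big)-\bar{S}_\beta=0,
\qquad \|\varphi_\beta\|_{C^{4,\alpha}_{\eta}}\to 0 \ \ (\beta\to 0).
\]
Since $\omega_\beta^{cscK}=\omega_\beta+\dol\,\varphi_\beta$ and $\omega_\beta\to\omega_0^{cscK}$, this would give existence and the asserted $C^{4,\alpha}_{\eta}$-convergence simultaneously.

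First I would construct $\omega_\beta$ by interpolation: away from a fixed neighborhood of $D$ take $\omega_\beta$ equal (or exponentially close) to $\omega_0^{cscK}$, while in a neighborhood of $D$ I glue in the model cone metric of angle $2\pi\beta$, using a potential built from $|s|^2_h$ for a defining section $s$ of $D$ and a metric $h$ on $L_X$, following the regularization that degenerates the cone tip into the cusp as $\beta\to 0$ as in Guenancia \cite{Gu} and Biquard--Guenancia \cite{BG}. The substance of this step is the error estimate: one must show that $F_\beta(0)=S(\omega_\beta)-\bar{S}_\beta$ is concentrated in the transition region and tends to $0$ in $C^{0,\alpha}_{\eta}$ at a rate controlled by a positive power of $\beta$, where the $\beta$-dependence of $\bar{S}_\beta$ is expanded using $D\in|L_X|$ and $c_1(L_X)|_D=c_1(L_D)$.

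The linearization is $DF_\beta|_0=-\mathcal{L}_{\omega_\beta}$, the Lichnerowicz operator of $\omega_\beta$, and the crux is the \emph{uniform} invertibility $\mathcal{L}_{\omega_\beta}\colon C^{4,\alpha}_{\eta}\to C^{0,\alpha}_{\eta}$ for all small $\beta$, with inverse bounded independently of $\beta$. At $\beta=0$ the operator $\mathcal{L}_{\omega_0^{cscK}}$ is formally self-adjoint and nonnegative, and its $L^2$-kernel consists of potentials generating holomorphic vector fields preserving the pair; triviality of $\mathrm{Aut}_0((X,L_X);D)$ forces this kernel to vanish, while an analysis of the indicial roots of $\mathcal{L}$ at $D$—governed by the scalar Laplacian and by holomorphic data on $(D,L_D)$—shows that for $-1\ll\eta<0$ no indicial root crosses the weight, so $\mathcal{L}_{\omega_0^{cscK}}$ is an isomorphism on $C^{4,\alpha}_{\eta}$. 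The \emph{main obstacle} is to propagate this to the collapsing family: because the geometry degenerates near $D$ and the weighted spaces are modelled on the varying metric, I expect to run a blow-up/rescaling argument on the cone-neck region, matched with the outer Poincar\'{e} region, to rule out approximate kernel elements concentrating at $D$. It is precisely here that the hypothesis that $D$ carries no nontrivial holomorphic vector field is essential: it excludes the transverse model solutions that would otherwise produce an eigenvalue of $\mathcal{L}_{\omega_\beta}$ tending to $0$ as $\beta\to 0$.

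Granting the uniform bound $\|\mathcal{L}_{\omega_\beta}^{-1}\|\le C$ together with uniform control of the higher-order remainder $N_\beta(\varphi)=F_\beta(\varphi)-F_\beta(0)+\mathcal{L}_{\omega_\beta}\varphi$ on small balls in $C^{4,\alpha}_{\eta}$—which follows from the algebra property of these weighted spaces and the uniform ellipticity of $\omega_\beta$ in rescaled coordinates—the fixed point $\varphi=\mathcal{L}_{\omega_\beta}^{-1}\big(F_\beta(0)+N_\beta(\varphi)\big)$ is solved by the contraction-mapping form of the inverse function theorem, yielding a unique small $\varphi_\beta$ with $\|\varphi_\beta\|_{C^{4,\alpha}_{\eta}}\le 2C\,\|F_\beta(0)\|_{C^{0,\alpha}_{\eta}}\to 0$. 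Elliptic regularity then upgrades $\omega_\beta^{cscK}=\omega_\beta+\dol\,\varphi_\beta$ to a genuine cscK cone metric of angle $2\pi\beta$, and the smallness of $\varphi_\beta$ combined with $\omega_\beta\to\omega_0^{cscK}$ gives convergence in $C^{4,\alpha}_{\eta}(X\setminus D)$. I anticipate that the uniform-invertibility step is the principal difficulty, the remaining steps being, modulo careful bookkeeping of weights, standard applications of the implicit function theorem.
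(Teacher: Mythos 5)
Your overall architecture --- a family of reference cone metrics $\omega_\beta$ degenerating to the cusp metric, a weighted-{\hol} estimate on $S(\omega_\beta)-\underline{S}_\beta$, uniform invertibility of the linearization, then a contraction mapping giving both existence and $C^{4,\alpha}_\eta$-convergence --- is exactly the paper's. The genuine gap is in the step you yourself call ``the substance'': the construction of $\omega_\beta$ and the claim that $F_\beta(0)=S(\omega_\beta)-\underline{S}_\beta\to 0$ in $C^{0,\alpha}_\eta$. Since $\eta<0$, this norm weights the error by $t^{-\eta}\to\infty$ along the end, so you need $S(\omega_\beta)-\underline{S}_\beta$ to \emph{decay} near $D$ (like $t^{-\delta}$ with $\delta>-\eta$), uniformly in $\beta$; an error merely ``concentrated in the transition region'' and small in sup norm is not enough. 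Gluing in the flat cone model, or taking Guenancia's regularization verbatim, does not deliver this in the \emph{fixed} class $c_1(L_X)$: the computation of the Ricci form near $D$ shows that the scalar curvature of a metric $\theta_X-c\,\dol\log(1-e^{-\beta t})$ tends to $-2/c+\beta(n-1)+S(\theta_X|_D)$ along $D$, so unless (i) the coefficient is the $\beta$-dependent constant $a_\beta=2/(\underline{S}_{D,\beta}-\underline{S}_\beta)$ (well defined and positive only thanks to Auvray's topological constraint $\underline{S}_D>\underline{S}$ of \cite{Au1}) and (ii) $\theta_X|_D$ is the cscK metric on $D$, the limit differs from $\underline{S}_\beta$ by a nonzero constant and the weighted norm of the error is \emph{infinite}, so the contraction scheme collapses. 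Ingredient (ii) in turn requires Auvray's asymptotic expansion \cite{Au3}, $\varphi_{cscK}=p^*\psi_D+O(t^{-\delta})$, together with uniqueness of cscK metrics on $D$ (from $H^0(D,TD)=0$, \cite{Do}) to normalize $\psi_D=0$; your proposal never invokes this expansion or the coefficient normalization, and these are precisely the new points of the paper relative to \cite{Gu}, where the class varies with $\beta$ and the coefficient issue does not arise.

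On the linear step your route also diverges from the paper's, in the direction of making it much harder. The paper's weighted spaces $C^{k,\alpha}_\eta$ are \emph{fixed} (cusp quasi-coordinate spaces, independent of $\beta$); this is exactly why it can quote Sektnan's isomorphism theorem \cite{Se} for $\mathcal{D}_{\omega_0^{cscK}}^*\mathcal{D}_{\omega_0^{cscK}}$ at $\beta=0$ and then conclude uniform invertibility for small $\beta$ by a soft argument: continuity of $\beta\mapsto\mathcal{D}_{\omega_\beta}^*\mathcal{D}_{\omega_\beta}$ in operator norm (coming from uniform convergence of $f_\beta$ and its derivatives to $f_0$) plus openness of invertibility. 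No blow-up, rescaling, or matched asymptotics appears, and your premise that ``the weighted spaces are modelled on the varying metric'' is not the paper's setup. Correspondingly, the hypothesis $H^0(D,TD)=0$ does not enter by excluding transverse kernels in a rescaling limit; it enters at $\beta=0$, through Sektnan's result and through the uniqueness statement used in the normalization above. Your concentration-compactness program might be made to work, but as written it is an unexecuted (and unnecessary) replacement for the one estimate actually needed.
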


Note that we consider the construction of a cscK cone metric in the \textbf{fixed} cohomology class $c_1 (L_X)$ (cf. \cite{Gu}).
Here, $C^{4,\alpha}_{\eta}(X \setminus D)$ denotes the weighted {\hol} space (see \S 2.1 in this paper and \cite{Au2}) and ${\rm Aut}_0 ((X,L_X); D)$ denotes the connected component of the identity in the holomorphic automorphism group preserving $D$.
The hypothesis on ${\rm Aut}_0 ((X,L_X); D)$ is a generic condition since we can find such a divisor generically by replacing $L_X$ with $mL_X$ for sufficiently large $m$ (see Proposition 2.11 in \cite{AHZ}). 
In fact, the standard elliptic theory implies that $\omega_{\beta}^{cscK} \to \omega_{0}^{cscK}$ in $C^{\infty}_{{\rm loc}} (X\setminus D)$ (see \S 3.6 in \cite{Aub}).
In particular, the sequence of metric spaces $(X \setminus D, \omega_{\beta}^{cscK}, p)$ converge to $(X \setminus D, \omega_{0}^{cscK}, p)$ in the sense of the pointed Gromov-Hausdorff topology where $p \in X \setminus D$ is a fixed point.
It is expected that the existence of cscK cone metrics is equivalent to log $K$-stability (\cite{Do2}, see also \cite{AHZ}). 
Zheng \cite{Zhe2} showed that the existence of cscK cone metrics is equivalent to log geodesic stability.
By using this result, we can show that the existence of cscK cone metrics implies ${\rm Aut}_0 ((X,L_X); D)$-uniform log $K$-stability for normal test configurations \cite{AHZ}.
Thus, we immediately obtain the following corollary.
\begin{cor}
Assume that there is no nontrivial holomorphic vector field on $D$ and ${\rm Aut}_0 ((X,L_X); D)$ is trivial.
If $X \setminus D$ admits a cscK metric of Poincar\'{e} type, then $((X,L_X); D)$ is uniformly log $K$-stable with sufficiently small angle $2 \pi \beta$ for normal test configurations.
\end{cor}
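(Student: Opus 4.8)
The plan is to derive Corollary~1.2 directly from Theorem~\ref{cone to cusp} together with the two external results quoted in the introduction, rather than to prove anything analytic from scratch. By hypothesis, $D$ carries no nontrivial holomorphic vector field and ${\rm Aut}_0((X,L_X);D)$ is trivial, and $X\setminus D$ admits a cscK metric of Poincar\'e type. Theorem~\ref{cone to cusp} then supplies, for every sufficiently small $\beta>0$, a genuine cscK cone metric $\omega_\beta^{cscK}\in c_1(L_X)$ with cone angle $2\pi\beta$ along $D$. Thus the first step is simply to invoke Theorem~\ref{cone to cusp} to pass from the Poincar\'e-type (angle $0$) object to the existence of cscK cone metrics of small positive angle; this converts an existence statement about a cusp metric into an existence statement about cone metrics, which is the input required by the stability machinery.

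Next I would feed this existence into the chain of implications already recorded in the introduction. First, by Zheng's theorem (\cite{Zhe2}), the existence of a cscK cone metric is equivalent to log geodesic stability of $((X,L_X);D)$ at the corresponding angle. Second, as noted in the excerpt following \cite{AHZ}, log geodesic stability upgrades to ${\rm Aut}_0((X,L_X);D)$-uniform log $K$-stability for normal test configurations. Since ${\rm Aut}_0((X,L_X);D)$ is assumed trivial, the qualifier ``${\rm Aut}_0((X,L_X);D)$-uniform'' collapses to ordinary uniform log $K$-stability, because the reduced $J$-functional relative to a trivial automorphism group coincides with the unreduced one. Carrying this out amounts to: fix $\beta$ small enough that Theorem~\ref{cone to cusp} applies; produce $\omega_\beta^{cscK}$; apply \cite{Zhe2} to get log geodesic stability; apply \cite{AHZ} to get uniform log $K$-stability for normal test configurations at angle $2\pi\beta$.

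The main subtlety, and the step I would write most carefully, is the bookkeeping of the cone angle through these equivalences: I must ensure that the angle $2\pi\beta$ for which Theorem~\ref{cone to cusp} guarantees a cscK cone metric is admissible in the hypotheses of both \cite{Zhe2} and \cite{AHZ}, i.e.\ that ``sufficiently small angle'' in the conclusion of the corollary can be taken to be exactly the range of $\beta$ furnished by Theorem~\ref{cone to cusp}. This is a matter of intersecting the constraints on $\beta$ coming from the three cited inputs and relabelling; no new estimates are needed. I do not expect a genuine analytic obstacle here, since all the hard analysis is already encapsulated in Theorem~\ref{cone to cusp} and in the cited stability results; the corollary is essentially a formal consequence, and the only care required is the triviality reduction of the automorphism-uniform notion and the uniform choice of the angle range.
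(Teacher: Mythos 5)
Your proposal coincides with the paper's own (implicit, one-line) proof: the corollary is deduced immediately by combining Theorem~\ref{cone to cusp}, which produces a cscK cone metric $\omega_\beta^{cscK}\in c_1(L_X)$ for every sufficiently small $\beta>0$, with the result of \cite{AHZ} quoted in the paper as Theorem 2.10 --- which itself rests on Zheng's equivalence between existence of cscK cone metrics and log geodesic stability \cite{Zhe2} --- and then observing that triviality of ${\rm Aut}_0((X,L_X);D)$ collapses ${\rm Aut}_0((X,L_X);D)$-uniform log $K$-stability to ordinary uniform log $K$-stability for normal test configurations. Your additional bookkeeping on the admissible range of $\beta$ matches the paper's qualifier ``sufficiently small angle $2\pi\beta$'' and introduces no gap.
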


In \cite{J.Sun}, J. Sun proved that the existence of a cscK metrics of Poincar\'{e} type (constructed by Calabi ansatz) on the total space of some line bundle over a cscK manifold with negative scalar curvature implies log K-semistability with angle 0.
Since the log Donaldson-Futaki invariant depends linearly on $\beta$ (\cite{Do2}, see also \cite{AHZ}), we have the following result which is a generalization of Sun's result.
\begin{cor}
\label{log K-semi}
Assume that there is no nontrivial holomorphic vector field on $D$ and ${\rm Aut}_0 ((X,L_X); D)$ is trivial.
If $X \setminus D$ admits a cscK metric of Poincar\'{e} type, then $((X,L_X); D)$ is log $K$-semistable with cone angle $0$ for normal test configurations.
\end{cor}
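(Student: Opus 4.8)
The plan is to obtain log $K$-semistability at angle $0$ as a limiting case of the uniform log $K$-stability at small positive angles supplied by the preceding corollary, by degenerating the cone angle to zero. The structural fact I would invoke is that, for a fixed normal test configuration $(\mathcal{X},\mathcal{L})$ of $((X,L_X);D)$, the log Donaldson-Futaki invariant ${\rm DF}_\beta(\mathcal{X},\mathcal{L})$ depends affinely on the angle parameter $\beta$ (this is the linear dependence recorded in \cite{Do2,AHZ}). Schematically one has ${\rm DF}_\beta = {\rm DF} + (1-\beta)\,\mu_D$, where ${\rm DF}$ is the ordinary Donaldson-Futaki invariant of the total space and $\mu_D$ is a boundary contribution determined by the induced degeneration of $D$; in particular $\beta \mapsto {\rm DF}_\beta(\mathcal{X},\mathcal{L})$ is continuous, indeed linear.

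First I would fix an arbitrary normal test configuration $(\mathcal{X},\mathcal{L})$. By Theorem \ref{cone to cusp}, the hypotheses guarantee the existence of a cscK cone metric $\omega_\beta^{cscK} \in c_1(L_X)$ of cone angle $2\pi\beta$ along $D$ for every $0 < \beta \ll 1$, with an angle threshold depending only on $(X,L_X;D)$ and not on the chosen degeneration. The preceding corollary then upgrades this to ${\rm Aut}_0((X,L_X);D)$-uniform log $K$-stability for normal test configurations at each such angle. Consequently there is a fixed $\beta_0 > 0$ so that for every $0 < \beta < \beta_0$ one has ${\rm DF}_\beta(\mathcal{X},\mathcal{L}) \geq 0$.

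Next I would let $\beta \to 0^{+}$ inside the interval $(0,\beta_0)$. Since ${\rm DF}_\beta(\mathcal{X},\mathcal{L})$ is affine, hence continuous, in $\beta$, nonnegativity on the open interval propagates to the endpoint:
\[
{\rm DF}_0(\mathcal{X},\mathcal{L}) = \lim_{\beta \to 0^{+}} {\rm DF}_\beta(\mathcal{X},\mathcal{L}) \geq 0 .
\]
As $(\mathcal{X},\mathcal{L})$ was an arbitrary normal test configuration, this is exactly log $K$-semistability of $((X,L_X);D)$ with cone angle $0$ for normal test configurations, which establishes the corollary.

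I expect this corollary to present no genuine analytic obstacle, since the substantive work has already been carried out in Theorem \ref{cone to cusp} and the preceding corollary; the only point that requires care is the uniformity of the angle threshold, namely that the interval of angles on which stability holds does not collapse. This is precisely what Theorem \ref{cone to cusp} provides, as the cscK cone metrics exist for all $0 < \beta \ll 1$ with a threshold intrinsic to $(X,L_X;D)$. I should also note that this limiting argument can only yield semistability rather than stability at angle $0$, because the strict positivity guaranteed for nontrivial configurations at positive angle may degenerate as $\beta \to 0^{+}$.
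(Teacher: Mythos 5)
Your proof is correct and takes essentially the same route as the paper: the paper deduces this corollary by combining Theorem \ref{cone to cusp} with the uniform log $K$-stability at small positive angles (via Zheng's result and \cite{AHZ}), and then letting $\beta \to 0$ using the linear, hence continuous, dependence of the log Donaldson-Futaki invariant on $\beta$ (\cite{Do2,AHZ}). Your two cautionary remarks---that the angle threshold from Theorem \ref{cone to cusp} is intrinsic to $((X,L_X);D)$ rather than depending on the test configuration, and that the limiting argument can only yield semistability at angle $0$---are precisely the points implicit in the paper's one-line justification.
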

This result is a partial solution of Sz\'{e}kelyhidi's conjecture when $H^0 (D, TD)=0$ and ${\rm Aut}_0 ((X,L_X); D)$ is trivial (see \cite{Sz1, AAS, Se}).
On the other hand, J. Sun and S. Sun conjectured that if $D$ has a cscK metric with nonpositive scalar curvature, then $((X,L_X); D)$ is log $K$-semistable with cone angle $0$ (see Conjecture 1.1 in \cite{SS}).
This conjecture has already been solved when $D$ has a scalar-flat {\kah} metric in \cite{S.Sun} (see also Introduction of \cite{SS}).
Auvray showed that the existence of a cscK metric of Poincar\'{e} type on $X \setminus D$ implies the existence of a cscK metric with negative scalar curvature on the divisor $D \in | L_X |$ (see Remark \ref{negative scalar curvature} in this paper).
So, Corollary \ref{log K-semi} is a solution of Conjecture 1.1 in \cite{SS} under the (stronger) hypothesis that $X \setminus D$ admits a cscK metric of Poincar\'{e} type (see the following diagram).

\begin{flalign*}
\xymatrix{
\mbox{$X \setminus D$ : Poincar\'{e} type cscK } \ar@{->}[r]^{{\rm \hspace{5pt} Auvray's \hspace{3pt} result \hspace{3pt} [Au3]}} \ar@{->}[rd]^{{\rm Corollary \hspace{2pt} \ref{log K-semi}}} \ar@{<.>}[d]^{{\rm {{Conjecture \hspace{3pt} [Sz]}}}} &  \ar@{.>}[d]^{{\rm {Conjecture \hspace{3pt} [SS]}}} \mbox{$D$ : negative cscK}\\
\mbox{$((X,L_X);D)$ : log K-stable for angle 0} \ar@{->}[r] & \mbox{$((X,L_X);D)$ : log K-semistable for angle 0} \\
}
\end{flalign*}
\vspace{8pt}

\begin{rem}
Let $\underline{S}_{D}$ be the average of the scalar curvature of a {\kah} metric in the class $c_1 (L_X |_D)$.
It is known that $((X,L_X);D)$ is log K-unstable with angle $2\pi \beta$ if $\beta < \frac{\underline{S}_{D}}{n (n-1)}$ in \cite{AHZ}.
In our setting: $D \in | L_X |$, Auvray's topological constraint \cite{Au1} tells us that the average of the scalar curvature on $D$ is negative: $\underline{S}_{D}<0$ (see Remark \ref{negative scalar curvature} in this paper).
Thus, Corollary \ref{log K-semi} does not contradict the result in \cite{AHZ}.
\end{rem}

This paper is organized as follows.
In Section 2, we recall the definitions of {\kah} metrics of Poincar\'{e}/cone singularities.
In addition, we define a background {\kah} metric with cone singularities, which converges to a cscK metric of Poincar\'{e} type.
Finally, we consider a fixed point formula on the weighted {\hol} space, which characterizing a cscK metric with cone singularities.
In Section 3, we prove Theorem \ref{cone to cusp}.

\begin{ack}
The author would like to thank Lars Martin Sektnan for helpful comments.
He also would like to thank Yoshinori Hashimoto for helpful comments and many discussions.
\end{ack}


\section{Preliminaries}

In this section, we consider the following.
Firstly, we recall the definitions of {\kah} metrics of Poincar\'{e} type and the weighted {\hol} space by following \cite{Au2}.
Secondly, we recall the definition of {\kah} metrics with cone singularities along a divisor by following \cite{Zhe1}.
Finally, we define a {\kah} metric with cone singularities, which converges locally to a cscK metric of Poincar\'{e} type.
(We will show that its scalar curvature is asymptotic to the average value of the scalar curvature in Lemma \ref{estimate at infinity} later.)
In addition, we consider a fixed point formula which characterizing a cscK cone metric.

\subsection{{\kah} metrics of Poincar\'{e} type and the weighted {\hol} space}

Let $(X, \theta_X)$ be an $n$-dimensional compact {\kah} manifold and $D$ be a smooth hypersurface in $X$.
We recall the definition of {\kah} metrics of Poincar\'{e} type for the simplest case when $D$ is a smooth divisor, by following \cite{Au1,Au2,Au3}.
In this paper, $(U ; z^1,...,z^n)$ denotes a holomorphic coordinate chart such that $U \cap D = \{ z^1 = 0\}$ and $| z^1 | <1$.

\begin{definition}[\cite{Au2}]
A {\kah} metric $\Theta$ on $X\setminus D$ is said to be \textit{of Poincar\'{e} type} if for all holomorphic coordinate chart $(U ; z^1,...,z^n)$, $\Theta$ is quasi-isometric to the standard cusp metric
\begin{equation}
\omega_{cusp} := \frac{\sqrt{-1} d z^1 \wedge d \overline{z^1}}{|z^1|^2 \log^2 |z^1|^2} + \sum_{j=2}^{n} \sqrt{-1} d z^j \wedge d \overline{z^j},
\end{equation}
and its derivatives are bounded at any order with respect to this model metric.

We say that $\Theta$ has class $[\theta_X]$ if it can be written as $\Theta = \theta_X + \dol \varphi$ for some smooth function $\varphi$ on $X \setminus D$ such that $\varphi = O(1 + \log (- \log |z^1|))$ and  its derivatives are bounded at any positive order with respect to the model metric above. \sq
\end{definition}

\begin{rem}
{\kah} metrics of Poincar\'{e} type can be defined for a simple normal crossing divisor $D$ (see \cite{Au1,Au2, Au3}).
\end{rem}

For {\kah} metrics of Poincar\'{e} type, there is a Banach space of functions on $X \setminus D$ defined by Kobayashi \cite{Ko} and Auvray \cite{Au2} by using quasi-coordinates introduced by Cheng-Yau \cite{CY}.
We recall quickly the definition of it for the reader's convenience (for more detail, see \cite{CY, Ko, Au2}).
We consider the standard cusp metric $\omega_{cusp}$ on the punctured disc $\Delta^{*} := \{ 0< |z| < 1 \} \subset \mathbb{C}$.
For $\delta \in (0,1)$, we define the following holomorphic map: $\varphi_\delta : \frac{3}{4}  \Delta \rightarrow \Delta^* , \zeta  \mapsto \exp \left( - \frac{1 + \delta}{1 - \delta} \frac{1 + \zeta}{1 - \zeta}  \right)$.
Here, $\Delta := \{ |z| <1\} \subset \mathbb{C}$.
It is known that $ \bigcup_{\delta \in (0,1)} \varphi_\delta (\frac{3}{4}  \Delta) = \Delta^*$ (see Section 2 in \cite{Ko}).
Directly, we note that the pull back of this {\kah} metric :
\begin{equation}
\varphi_{\delta}^{*} \omega_{cusp} = \frac{\sqrt{-1} d \zeta \wedge d \overline{\zeta}}{(1-|\zeta|^2 )^2},
\end{equation}
is independent of $\delta \in (0,1)$ and is $C^{\infty}$-quasi-isometric to the Euclidean metric on $\frac{3}{4} \Delta$.
By using this holomorphic map $\varphi_\delta$, we define a holomorphic map $\Phi_\delta : \mathcal{P} := \frac{3}{4}  \Delta \times \Delta^{n-1} \to \Delta^* \times \Delta^{n-1} , (\zeta_1, z_2,..., z_n) \mapsto ( \varphi_\delta(\zeta_1), z_2,..., z_n)$.
For $k \in \mathbb{Z}_{\geq 0}, \alpha \in (0,1)$, we define the $C^{k, \alpha}(U \setminus D)$-norm by
\begin{equation}
\label{local norm}
\Vert  f \Vert _{C^{k, \alpha}(U \setminus D)} := \sup_{\delta \in (0,1)} \Vert  \Phi^{*}_{\delta} f \Vert _{C^{k, \alpha}(\mathcal{P})}.
\end{equation}
Here, we have identified $U \setminus D \simeq \Delta^* \times \Delta^{n-1}$.
We take an open subset $U_0 \Subset X \setminus D$ and a finite covering $\{ U_i \}_{i=1}^{N}$ of $D$ such that $U_0 \bigcup \left( \cup_{i=1}^{N} U_i \right) = X$.

\begin{definition}[The {\hol} space on $X \setminus D$, \cite{Ko}, see also \S 1.1 in \cite{Au2}]
For $k \in \mathbb{Z}_{\geq 0}, \alpha \in (0,1)$, the {\hol} space $C^{k,\alpha} = C^{k,\alpha}(X \setminus D)$ is defined by the norm
\begin{equation}
\Vert f \Vert_{C^{k,\alpha}(X \setminus D)} := \Vert f \Vert_{C^{k,\alpha}(U_0)} + \max_{i=1,...,N}  \Vert  f \Vert _{C^{k, \alpha}(U_i \setminus D)},
\end{equation}
where the norm $\Vert  f \Vert _{C^{k, \alpha}(U_i \setminus D)}$ is defined by (\ref{local norm}).
\end{definition}
For a defining section $\sigma_D \in H^0 (X, L_X)$ of $D$ and a Hermitian metric $h_X$ on $L_X$, we set a function $t$ on $X \setminus D$ by
\begin{equation}
t := \log \Vert  \sigma_D \Vert _{h_X}^{-2}.
\end{equation}
We use $t$ as the weight function in order to define the weighted {\hol} space below.

\begin{definition}[The weighted {\hol} space on $X \setminus D$, \S 3.1 in \cite{Au2}]
\label{holder space}
For $\eta \in \mathbb{R}$, the weighted {\hol} space $C_{\eta}^{k,\alpha} = C_{\eta}^{k,\alpha} (X \setminus D)$ and its norm are defined by
\begin{eqnarray}
C_{\eta}^{k,\alpha} (X \setminus D) &:=& \{ f \in C^{k,\alpha}_{{\rm loc}} (X \setminus D) \hspace{3pt } | \hspace{3pt}  t^{-\eta} f \in C^{k,\alpha}(X \setminus D) \},\\
\Vert  f \Vert_{C_{\eta}^{k,\alpha} (X \setminus D)} &:=& \Vert t^{-\eta} f \Vert_{C^{k,\alpha} (X \setminus D)}. \label{weight norm}
\end{eqnarray}
\end{definition}
Note that if $f \in C_{\eta}^{k,\alpha}$ for $\eta < 0$, $f$ and its derivatives decay at infinity.

It is expected that the existence of cscK (more generally, extremal {\kah}) metrics of Poincar\'{e} type is equivalent to algebro-geometric stability as follows.

\begin{conj}[\S 3.1 in \cite{Sz1}, see also \cite{AAS, Se} for more precise statements]
$((X,L_X); D)$ has a cscK (extremal) {\kah} metric of Poincar\'{e} type iff it is (relative) K-polystable for angle 0.
\end{conj}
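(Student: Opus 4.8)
The plan is to treat this as a Yau--Tian--Donaldson type equivalence in the Poincar\'{e} (angle $0$) regime and to prove the two implications separately, organizing everything around the log Mabuchi K-energy $\mathcal{M}$ (in the extremal case, its relative version $\mathcal{M}^{\mathrm{rel}}$ associated to the reduced automorphism group) defined on the space of Poincar\'{e} type potentials $\varphi$ with $\theta_X + \dol \varphi$ a metric of Poincar\'{e} type. For the forward implication, existence $\Rightarrow$ stability, I would follow the variational route. To a normal test configuration $(\mathcal{X},\mathcal{L})$ of $((X,L_X);D)$ one associates a weak geodesic ray $\{\varphi_s\}_{s\ge 0}$ of Poincar\'{e} type potentials, and the main step is a slope formula: the asymptotic slope $\lim_{s\to\infty}\mathcal{M}(\varphi_s)/s$ equals a positive multiple of the angle-$0$ log Donaldson--Futaki invariant $\mathrm{DF}_0$, with the contribution of $D$ entering through the weight $t=\log\|\sigma_D\|_{h_X}^{-2}$. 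Given a cscK (extremal) metric of Poincar\'{e} type, $\mathcal{M}$ (resp. $\mathcal{M}^{\mathrm{rel}}$) is convex along such rays and minimized at that metric, so the slope is nonnegative and $\mathrm{DF}_0\ge 0$; this is exactly the semistability already recorded in Corollary \ref{log K-semi}. To upgrade to \emph{poly}stability one shows that $\mathrm{DF}_0=0$ forces the ray to be induced by a one-parameter subgroup of $\mathrm{Aut}_0((X,L_X);D)$, using strict convexity of $\mathcal{M}^{\mathrm{rel}}$ transverse to the automorphisms together with the hypothesis $H^0(D,TD)=0$; an alternative, more in the spirit of this paper, is to pass to the cone approximation of Theorem \ref{cone to cusp}, use uniform log K-stability at angle $2\pi\beta$, and let $\beta\to 0$ by linearity of $\mathrm{DF}$ in $\beta$.

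The reverse implication, stability $\Rightarrow$ existence, is the analytically hard direction, and here the plan is two-layered. First, I would establish a Berman--Boucksom--Jonsson type dictionary adapted to the cusp setting, showing that uniform (relative) log K-stability at angle $0$ for normal test configurations is equivalent to coercivity (properness) of $\mathcal{M}$ (resp. $\mathcal{M}^{\mathrm{rel}}$) on the space of Poincar\'{e} type potentials, with the non-Archimedean functionals built from the weight $t$. Second, from properness I would produce a minimizer by the direct method and promote it to a smooth cscK metric of Poincar\'{e} type via Chen--Cheng type a priori estimates, now carried out in the weighted {\hol} spaces $C^{k,\alpha}_{\eta}(X\setminus D)$ of \S 2.1. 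A concrete path that dovetails with the present work is to run a continuity method in the cone angle: for small $\beta>0$, uniform log K-stability at angle $2\pi\beta$ yields a cscK cone metric $\omega_\beta^{cscK}$ (via the log Yau--Tian--Donaldson correspondence of \cite{Zhe2} and \cite{AHZ}), and one then derives $\beta$-uniform weighted estimates and passes to the limit $\beta\to 0$, \emph{reversing} Theorem \ref{cone to cusp} to obtain the Poincar\'{e} type metric.

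The main obstacle is the reverse direction, and specifically the interplay between the completeness and noncompactness of the cusp geometry and the two analytic inputs above. The Chen--Cheng estimates rest on the compactness of $X$ and on uniform entropy and Sobolev bounds that degenerate as one approaches $D$, so the difficulty is to obtain a priori control that is uniform up to the boundary and compatible with the decay encoded by $\eta<0$; equivalently, in the continuity approach the crux is $\beta$-uniform estimates strong enough to control the limit in $C^{4,\alpha}_{\eta}$. Establishing the slope formula with the correct $D$-boundary term, and the regularity of the associated geodesic rays near $D$, is the other delicate point, since the pluripotential theory on $X\setminus D$ must be developed with respect to the Poincar\'{e} model rather than a compact background. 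These are precisely the places where the one-directional result of this paper stops short of the full equivalence, and where new uniform analysis near the divisor would be required.
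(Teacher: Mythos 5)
You should note first that the statement you are proving is labelled a \emph{conjecture} in the paper (\S 3.1 of \cite{Sz1}), and the paper contains no proof of it; its actual contribution is strictly weaker, namely log K-\emph{semi}stability at angle $0$ (Corollary \ref{log K-semi}) under the extra hypotheses $H^0(D,TD)=0$ and ${\rm Aut}_0((X,L_X);D)$ trivial, obtained from the cone approximation of Theorem \ref{cone to cusp} together with uniform log K-stability at angle $2\pi\beta$ from \cite{AHZ} and linearity of the log Donaldson--Futaki invariant in $\beta$. Your proposal is therefore a research program rather than a proof, and each of its pillars is itself open in the cusp setting: the slope formula for the Mabuchi functional along geodesic rays of Poincar\'{e} type potentials, the Berman--Boucksom--Jonsson coercivity dictionary relative to the weight $t$, and Chen--Cheng type a priori estimates in $C^{k,\alpha}_{\eta}(X\setminus D)$ all require pluripotential and uniform analysis near $D$ that does not currently exist. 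You acknowledge these obstacles, but acknowledging them does not close them.

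Two steps in your outline are concretely wrong or circular as stated. First, your ``alternative'' route to polystability --- pass to cone metrics, invoke uniform log K-stability at angle $2\pi\beta$, and let $\beta\to 0$ by linearity of ${\rm DF}$ --- only yields ${\rm DF}_0\ge 0$ in the limit, because the uniform stability constant may degenerate as $\beta\to 0$; this is exactly the paper's Corollary \ref{log K-semi}, i.e.\ semistability, and it cannot detect which configurations with ${\rm DF}_0=0$ are product configurations. Upgrading to polystability needs the strict convexity/geodesic-ray analysis you sketch, which in turn needs the regularity of rays near the cusp that you list among the open obstacles. Second, your continuity-method plan for the hard direction proposes to ``reverse'' Theorem \ref{cone to cusp}, but the paper's construction is anchored on the Poincar\'{e} type cscK metric: the background cone metric $\omega_\beta = \theta_X - \dol G_\beta(t) + \dol \varphi_{cscK}$ uses the potential $\varphi_{cscK}$ and the cscK metric $\theta_X|_D$ on $D$ supplied by Auvray's asymptotics \cite{Au3}, and the contraction argument works only because $S(\omega_\beta)-\underline{S}_\beta$ is small in $C^{0,\alpha}_\eta$ for that specific background. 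In the stability$\Rightarrow$existence direction no such background is available, the family $\omega_\beta^{cscK}$ produced by \cite{Zhe2,AHZ} comes with no $\beta$-uniform weighted control, and Sektnan's isomorphism (Proposition \ref{Sektnan}) --- the source of the uniform inverse --- is again stated at a Poincar\'{e} type cscK metric you do not yet have. So the limit $\beta\to 0$ is precisely the missing content, not a corollary of the paper's machinery.
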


\subsection{{\kah} metrics with cone singularities}

In this subsection, we recall the definition of {\kah} metrics with cone singularities along $D$.
In addition, we recall the definition of cscK metric with cone singularities along $D$.
We take a real parameter $\beta \in (0,1]$.

\begin{definition}[\cite{Zhe1}]
A {\kah} metric $\omega$ on $X \setminus D$ is said to be \textit{a {\kah} cone metric of cone angle $2 \pi \beta$} if $\omega$ is quasi isometric to the flat cone metric:
\begin{equation}
\omega_{{\rm cone}} := \frac{\beta^2 \sqrt{-1} d z^1 \wedge d \overline{z^1}}{|z^1|^{2(1 - \beta)} } + \sum_{j=2}^{n} \sqrt{-1} d z^j \wedge d \overline{z^j},
\end{equation}
on the holomorphic coordinate chart $(U ; z^1,...,z^n)$ such that $D = \{ z^1 = 0\}$.
\end{definition}

We recall the definition of cscK metrics with cone singularities introduced by Zheng \cite{Zhe1}.
Let $\Omega$ be a {\kah} class and $\omega_{0} \in \Omega$ be a {\kah} metric.
We set
$$
c_1 (X , D, \beta) := c_1 (X) - (1-\beta) c_1 (L_D),
$$
where $L_D$ denotes the line bundle associated with $D$.
Fix a smooth form $\theta \in c_1 (X,D,\beta)$.
Let $s$ be a defining section of $D$ and $h$ be a Hermitian metric on $L_D$.
The symbol $\Theta_D$ denotes the curvature form multiplied by $\sqrt{-1}$, i.e., $\Theta_D = - \dol \log h$.
The $\partial \overline{\partial}$-lemma tells us that there exists a smooth function $f$ satisfying
$$
{\rm Ric}(\omega_{0}) = \theta + (1 - \beta)\Theta_D + \dol f.
$$
We can find the {\kah} cone metric $\omega_\theta$ of angle $2\pi\beta$ by solving the following singular complex {\ma} equation
$$
\omega_\theta^n = e^f \Vert  s \Vert ^{2\beta -2}_{h} \omega_{0}^n.
$$ 
Note that the {\kah} metric $\omega_\theta$ satisfies
$$
{\rm Ric}(\omega_\theta) = \theta +2\pi (1 - \beta) [D].
$$

\begin{definition}[Def 3.1 in \cite{Zhe1}]
\label{def cscK cone}
A {\it cscK cone metric} $\omega_{cscK}$ in the class $\Omega$ is the solution of the following coupled system in the sense of currents on $X$:
\begin{equation}
\label{coupled system}
\frac{\omega_{cscK}^{n}}{\omega_{\theta}^{n}} = e^F, \hspace{10pt} \Delta_{\omega_{cscK}} F = {\rm tr}_{\omega_{cscK}} \theta - \underline{S}_{\beta}.
\end{equation}
\end{definition}
Here, $\underline{S}_{\beta}$ denotes the topological constant defined by
\begin{equation}
\underline{S}_{\beta} := \frac{n c_1(X,D,\beta) \Omega^{n-1}}{\Omega^{n}}.
\end{equation}
We can easily check that the scalar curvature of cscK cone metric $\omega_{cscK}$ is equal to $\underline{S}_{\beta}$ on $X\setminus D$.
Conversely, we have
\begin{lemma}
\label{cscK cone lemma}
Let $\omega \in \Omega$ be a {\kah} cone metric of angle $2\pi\beta$.
Assume that the scalar curvature of $\omega$ is equal to $\underline{S}_{\beta}$, i.e., $S(\omega) = \underline{S}_{\beta}$ on $X\setminus D$.
Then, $\omega$ is a cscK cone metric, i.e., $\omega$ satisfies the coupled system (\ref{coupled system}) in Definition \ref{def cscK cone}.
\end{lemma}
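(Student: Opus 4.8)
The plan is to take the first equation of the coupled system (\ref{coupled system}) as the \emph{definition} of $F$, and then to derive the second equation from the scalar curvature hypothesis, first as a pointwise identity on $X\setminus D$ and afterwards as an identity of currents on all of $X$.

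First I would set $F := \log (\omega^n / \omega_\theta^n)$ on $X\setminus D$. Since $\omega$ and $\omega_\theta$ both lie in $\Omega$ and are \kah\ cone metrics of the \emph{same} cone angle $2\pi\beta$ along $D$, each is quasi-isometric to the flat cone model $\omega_{{\rm cone}}$, hence they are mutually quasi-isometric; in particular their volume forms are comparable, so $F$ is a bounded function on $X\setminus D$. This establishes $\omega^n/\omega_\theta^n = e^F$ by construction. Next I would run the standard local Ricci computation on $X\setminus D$: from ${\rm Ric}(\omega) = -\dol \log \omega^n$ and $\omega^n = e^F \omega_\theta^n$ one gets ${\rm Ric}(\omega) = {\rm Ric}(\omega_\theta) - \dol F$. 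Because ${\rm Ric}(\omega_\theta) = \theta + 2\pi(1-\beta)[D]$, its restriction to $X\setminus D$ is exactly $\theta$, so ${\rm Ric}(\omega) = \theta - \dol F$ there. Taking the trace with respect to $\omega$ and using $\Delta_\omega F = {\rm tr}_\omega (\dol F)$ yields $S(\omega) = {\rm tr}_\omega \theta - \Delta_\omega F$, and substituting the hypothesis $S(\omega) = \underline{S}_\beta$ gives the pointwise identity $\Delta_\omega F = {\rm tr}_\omega \theta - \underline{S}_\beta$ on $X\setminus D$. (Note the $2\pi(1-\beta)[D]$ terms of ${\rm Ric}(\omega)$ and ${\rm Ric}(\omega_\theta)$ cancel, consistent with $F$ being bounded; this is precisely why the two metrics must share the same angle.)

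The real work is to upgrade this pointwise identity on $X\setminus D$ to the equality of currents on $X$ demanded by Definition \ref{def cscK cone}. Here I would show that the two sides, viewed as the measures $\Delta_\omega F\,\omega^n = n\,\dol F \wedge \omega^{n-1}$ and $({\rm tr}_\omega \theta - \underline{S}_\beta)\,\omega^n$, define the same distribution when tested against arbitrary $\phi \in C^\infty(X)$. I would introduce a family of cutoff functions $\chi_\varepsilon$ vanishing in a shrinking tubular neighborhood of $D$, integrate by parts on the region $\{\chi_\varepsilon = 1\}$ where the identity already holds, and then control the boundary contributions collected near $D$ as $\varepsilon \to 0$. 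The boundedness of $F$, the smoothness (hence integrability) of $\theta$ against the cone volume, and the explicit $|z^1|^{2\beta-2}$ weight of $\omega^{n-1}$ together with the real codimension two of $D$ should force these boundary terms to vanish, so that no distributional mass is concentrated on $D$.

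This last estimate is the step I expect to be the main obstacle: one must bound the near-$D$ terms uniformly in $\varepsilon$ so that no spurious $[D]$-current is produced by the integration by parts. Once this is established, the two currents agree on $X\setminus D$ and carry no mass along $D$, hence coincide on $X$; combined with the defining relation $\omega^n/\omega_\theta^n = e^F$, this shows that $\omega$ satisfies (\ref{coupled system}) and is therefore a cscK cone metric.
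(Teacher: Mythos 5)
Your definition of $F$ and the pointwise computation on $X\setminus D$ are exactly the paper's: the paper also sets $F:=\log(\omega^n/\omega_\theta^n)$, notes that $F$ is bounded, and derives $\Delta_\omega F={\rm tr}_\omega\theta-\underline{S}_\beta$ from the hypothesis $S(\omega)=\underline{S}_\beta$. (The paper phrases the Ricci computation in local coordinates, using $\omega_\theta^n=e^f\Vert s\Vert_h^{2\beta-2}\omega_0^n$ and ${\rm Ric}(\omega_0)=\theta+(1-\beta)\Theta_D+\dol f$; your global form via ${\rm Ric}(\omega)={\rm Ric}(\omega_\theta)-\dol F$ is the same calculation.) The two proofs part ways at the passage to currents on $X$, and that is precisely where yours is incomplete. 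The paper does not run a cutoff argument at all: it records that $F\in C^{2,\alpha,\beta}$ by Lemma 3.3 in \cite{Zhe1} (Zheng's cone-adapted H\"older regularity of the volume ratio), so that the terms in the computation extend across $D$ with no mass on the divisor, the point being that the singular factor $|z^1|^{2(1-\beta)}$ cancels the cone singularity of $\omega^n$. Citing that regularity is what makes the paper's one-line upgrade to currents legitimate.

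Your replacement for that step --- cutoffs plus integration by parts, justified by ``boundedness of $F$, integrability, and codimension two'' --- is left as a plan exactly at the decisive estimate, and as described it would not go through. The dangerous boundary term is $\int F\phi\,\dol\chi_\varepsilon\wedge\omega^{n-1}$, and it cannot be controlled by $\Vert F\phi\Vert_{L^\infty}\int|\dol\chi_\varepsilon|_\omega\,\omega^n$: already for radial cutoffs in the two-dimensional model cone one has $\int|\nabla^2\chi_\varepsilon|\,dV\geq 2\pi\beta\int|\chi_\varepsilon'|\,d\rho\geq 2\pi\beta$, so the full Hessian of \emph{any} cutoff family has $L^1$-mass bounded below; boundedness of $F$ and codimension two are not enough by themselves. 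What saves the argument is structure that your sketch never invokes: first, only the trace enters, since $n\,\dol\chi_\varepsilon\wedge\omega^{n-1}=(\Delta_\omega\chi_\varepsilon)\,\omega^n$; second, one should take $\chi_\varepsilon=\chi(t)$ with $\chi$ affine in the weight $t=\log\Vert\sigma_D\Vert_{h_X}^{-2}$ over a transition region of $t$-length $\log(1/\varepsilon)$, smoothed at the corners. Since $\dol t=\theta_X$ on $X\setminus D$, one gets $\Delta_\omega\chi_\varepsilon=\chi'(t)\,{\rm tr}_\omega\theta_X+\chi''(t)\,|\partial t|^2_\omega$, and both contributions (as well as the cross terms $\partial\chi_\varepsilon\wedge\overline{\partial}\phi\wedge\omega^{n-1}$) have total mass $O(1/\log(1/\varepsilon))$ against $\omega^n$. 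With that choice your route closes using only boundedness of $F$; without it, the key step of the lemma --- that no distributional mass appears on $D$ --- remains unproved. The shortest repair is the paper's: quote $F\in C^{2,\alpha,\beta}$ from \cite{Zhe1} and bypass the cutoffs altogether.
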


\begin{proof}
We define a bounded function $F$ by
$$
F := \log \frac{\omega^n}{\omega_\theta^n}.
$$
Note that $F$ is smooth on $X \setminus D$ (in particular, $F \in C^{2,\alpha,\beta}$ (see Lemma 3.3 in \cite{Zhe1})).
On local holomorphic coordinates $(U; z^1 , ..., z^n)$ such that $D = \{ z^1 = 0 \}$, we can write $\Vert  s \Vert ^{2}_{h} = |z^1 |^2 e^{-a}$ for some smooth function $a$.
Note that $a$ satisfies $\Theta_D = - \dol a$
So, we have
\begin{eqnarray*}
\Delta_{\omega} F
&=& \Delta_{\omega} \log \frac{\omega^n}{\omega_\theta^n} \\
&=& \frac{ n\dol \log ( | z^1 |^{2(1-\beta)} \omega^n ) \wedge \omega^{n-1} }{\omega^{n}} - \frac{ n\dol \log (e^{f+(1 - \beta)a} \omega_{0}^{n} ) \wedge \omega^{n-1} }{\omega^{n}} \\
&=& -\underline{S}_{\beta} - {\rm tr}_{\omega} ( \dol f + (1-\beta) \Theta_D - {\rm Ric} \omega_{0} ) \\
&=& -\underline{S}_{\beta} + {\rm tr}_{\omega} \theta.\
\end{eqnarray*}
Note that the singularities of the volume form $\omega^n$ are canceled by multiplying the factor $| z^1 |^{2(1-\beta)}$.
Thus, this equation holds in the sense of currents on $X$.
\end{proof}

The existence of cscK cone metrics is related to algebro-geometric stability which is called log $K$-polystability.

\begin{conj}[Log Yau-Tian-Donaldson conjecture (see \cite{AHZ})]
The pair $((X,L_X); D)$ admits a cscK cone metric iff it is log $K$-polystable.
\end{conj}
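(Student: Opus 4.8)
This is the log (cone) analogue of the Yau--Tian--Donaldson correspondence, so I would treat the two implications separately, with quite different tools for each. \emph{Existence $\Rightarrow$ log $K$-polystability} is the direction for which the machinery is essentially available, and the plan is to route through geodesic stability rather than attack the log Donaldson--Futaki invariant head on. Given a cscK cone metric in $\Omega$, I would first invoke Zheng's theorem \cite{Zhe2} that its existence is equivalent to log geodesic stability, namely nonnegativity of the asymptotic slope of the log Mabuchi functional $\mathcal{M}_\beta$ along every finite-energy geodesic ray in the space $\mathcal{H}_\beta$ of cone {\kah} potentials, with equality only along rays generated by elements of ${\rm Aut}_0((X,L_X);D)$. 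To pass to algebra, I would associate to a normal test configuration its weak geodesic ray and compare the asymptotic slope of $\mathcal{M}_\beta$ with the log Donaldson--Futaki invariant; since the latter is affine in $\beta$ (\cite{Do2}, \cite{AHZ}), this comparison converts geodesic stability into ${\rm Aut}_0((X,L_X);D)$-uniform log $K$-stability for normal test configurations, and the equality case identifies the vanishing-invariant configurations as the product ones, which is exactly log $K$-polystability. The one delicate point is the slope formula in the singular setting, where the boundary current along $D$ contributes to the asymptotics and must be controlled.

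\emph{Log $K$-polystability $\Rightarrow$ existence} is the genuinely hard direction and the crux of the conjecture. I would set up a continuity method in the cone angle (or, equivalently, a variational scheme minimizing $\mathcal{M}_\beta$ over $\mathcal{H}_\beta$) and aim to show that the set of angles admitting a cscK cone solution is open and closed. Openness should follow from the implicit function theorem applied to the linearization of the coupled system (\ref{coupled system}) in the weighted cone {\hol} spaces $C^{k,\alpha,\beta}$, once one knows the associated Lichnerowicz-type operator has trivial kernel---guaranteed by the absence of nontrivial holomorphic vector fields preserving $D$, and in the polystable case after quotienting by ${\rm Aut}_0((X,L_X);D)$. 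Closedness requires uniform a priori estimates: a $C^0$ bound on the potential coming from properness of $\mathcal{M}_\beta$ (equivalent to stability by a Darvas--Rubinstein type coercivity argument adapted to cones), followed by the higher cone-{\hol} estimates for the fourth-order system.

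The main obstacle is the compactness step underlying closedness. If the continuity path fails to close up, the metrics must degenerate, and one has to extract an algebraic destabilizing test configuration from the analytic (pointed Gromov--Hausdorff) limit, contradicting stability. In the smooth cscK case this is precisely the content of the Chen--Cheng a priori estimates together with a subsequent degeneration analysis; transplanting it to the cone setting is substantially harder, because the singular geometry transverse to $D$ complicates both the interior estimates and the partial $C^0$-estimate needed to recognize the limit as a polarized degeneration. Producing such a cone partial $C^0$-estimate, uniformly as $\beta$ varies, is to my mind the decisive difficulty and the reason the equivalence remains a conjecture in full generality; the present paper instead establishes the one-sided, small-angle consequences (log $K$-semistability and uniform log $K$-stability) that are accessible by the conical approximation of Theorem \ref{cone to cusp}.
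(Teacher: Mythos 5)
The statement you were asked about is labelled a \emph{conjecture} in the paper, and the paper contains no proof of it: it records only the one-sided results imported from \cite{AHZ} (existence of a cscK cone metric implies ${\rm Aut}_0((X,L_X);D)$-uniform log $K$-stability for normal test configurations, and likewise log $K$-(semi/poly)stability). So there is no argument of the author's to compare yours against; what can be assessed is whether your proposal actually proves the equivalence, and it does not. Your treatment of the direction \emph{existence $\Rightarrow$ log $K$-polystability} is consistent with the known route (Zheng's equivalence with log geodesic stability \cite{Zhe2}, then the slope comparison between the log Mabuchi functional and the log Donaldson--Futaki invariant as in \cite{AHZ}), though the step you flag only in passing --- the slope formula along the weak geodesic ray associated to a normal test configuration in the conical setting, and the identification of the equality case with product configurations --- is exactly where the real work lies, since weak geodesics have limited regularity and the boundary current along $D$ enters the asymptotics; this is carried out in \cite{AHZ} and is not a formality.

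For the converse direction your text is a research program, not a proof, and you concede as much. Concretely, three steps are missing and none is available in the literature: (i) deducing coercivity/properness of the log Mabuchi functional $\mathcal{M}_\beta$, or log geodesic stability, from the purely algebraic hypothesis of log $K$-polystability --- a Darvas--Rubinstein-type principle converts \emph{geodesic} stability into properness, but bridging from polystability over test configurations to stability over all finite-energy rays is the open non-Archimedean gap, unresolved even in the smooth cscK case, so your openness-closedness scheme has no a priori $C^0$ input; (ii) Chen--Cheng-type a priori estimates for the fourth-order cscK system transplanted to the cone spaces $C^{k,\alpha,\beta}$, which your closedness step requires; and (iii) the partial $C^0$-estimate, uniform in $\beta$, needed to extract a polarized destabilizing degeneration from an analytic limit. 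Since (i)--(iii) are each open, the proposal cannot stand as a proof; it is, rather, an accurate map of why the statement remains a conjecture, which is also how the paper treats it --- what the paper actually establishes (Theorem \ref{cone to cusp} and its corollaries) are precisely the small-angle, one-sided consequences you mention at the end.
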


The definitions of log $K$-polystability is given in \cite{Do2} (see also Section 3.1 of \cite{AHZ}).
The necessary condition of the existence of cscK cone metric is shown as follows.

\begin{thm}[\cite{AHZ}]
If $((X,L_X); D)$ admits a cscK cone metric of angle $2 \pi \beta$, then it is ${\rm Aut}_0 ((X,L_X); D)$-uniformly log $K$-stable with angle $2 \pi \beta$.
\end{thm}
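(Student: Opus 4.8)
The plan is to deduce uniform log $K$-stability from the energy/variational characterization of cscK cone metrics, following the strategy that is by now standard in the smooth case (Berman--Darvas--Lu, Chen--Cheng) and adapting it to the cone setting. First I would introduce the log Mabuchi $K$-energy $M_\beta$ on the space of finite-energy cone {\kah} potentials in the class $c_1(L_X)$, together with the reduced automorphism group $G := {\rm Aut}_0((X,L_X);D)$ acting on this space. The starting point is Zheng's theorem quoted above: the existence of a cscK cone metric is equivalent to log geodesic stability, which I would reformulate as a coercivity (properness modulo $G$) estimate of the shape
\[
M_\beta(\phi) \geq \delta\, J^{G}(\phi) - C
\]
for constants $\delta>0$ and $C$, where $J^{G}$ is the $G$-invariant $J$-functional measuring the distance from the $G$-orbit of a fixed reference cone potential.

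Next I would attach to every normal test configuration $(\mathcal{X},\mathcal{L})$ for $((X,L_X);D)$ its associated (weak) geodesic ray $\{ \phi_t \}_{t \geq 0}$ emanating from the reference cone metric, via the Phong--Sturm type construction of geodesic rays from test configurations, adapted to the cone singular setting. The crucial analytic input is then the slope formula: the asymptotic slope of the log Mabuchi energy along this ray computes the log Donaldson--Futaki invariant, or at least obeys the lower bound
\[
\liminf_{t \to \infty} \frac{M_\beta(\phi_t)}{t} \geq {\rm DF}_\beta(\mathcal{X},\mathcal{L}),
\]
which is all that is needed. In parallel, the slope of $J^{G}$ along the same ray recovers the non-Archimedean $J$-norm (equivalently the minimum norm, taken modulo $G$) of the test configuration.

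Combining the two, I would divide the coercivity estimate $M_\beta(\phi_t) \geq \delta\, J^{G}(\phi_t) - C$ by $t$ and let $t \to \infty$, obtaining ${\rm DF}_\beta(\mathcal{X},\mathcal{L}) \geq \delta\, \Vert (\mathcal{X},\mathcal{L}) \Vert_{{\rm NA},G}$, which is exactly $G$-uniform log $K$-stability with angle $2\pi\beta$ for normal test configurations; the product configurations induced by $G$ are precisely those annihilated by the $G$-reduced norm, which is why the statement is necessarily phrased modulo $G$. The main obstacle I anticipate is establishing the slope formula in the presence of the cone singularities along $D$: one must control the behaviour of the geodesic ray and of the Monge--Amp\`ere and entropy terms of $M_\beta$ near $D$, verify that the boundary (cone) contribution enters ${\rm DF}_\beta$ with the correct coefficient $(1-\beta)$, and justify the passage to the non-Archimedean limit. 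This requires the regularity theory for cone geodesics and a careful analysis of the limiting behaviour of the energy functionals, and it is the technical heart of the argument carried out in \cite{AHZ}.
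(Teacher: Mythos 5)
This theorem is not proved in the paper at all: it is imported as a citation to \cite{AHZ}, and the only indication of its proof given here is the remark in the introduction that it follows from Zheng's equivalence \cite{Zhe2} between existence of cscK cone metrics and log geodesic stability. Your proposal --- coercivity of the log Mabuchi energy modulo ${\rm Aut}_0((X,L_X);D)$ obtained from Zheng's theorem, geodesic rays attached to normal test configurations, and a slope formula bounding the log Donaldson--Futaki invariant from below by the asymptotic slopes --- is exactly the route the paper points to and that \cite{AHZ} carries out, with the cone-singular slope formula being, as you say, the technical heart delegated to that reference.
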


It is also proved that the existence of cscK cone metric implies log K-(semi/poly)stability in \cite{AHZ}.
The definition of uniform log $K$-stability is also given in Section 3.1 of \cite{AHZ}.

\subsection{{\kah} cone metrics approximating a cscK metric of Poincar\'{e} type and fixed point formula}

We assume the existence of cscK metrics of Ponicar\'{e} type on $X \setminus D$ through out this paper.
In this subsection, we define a {\kah} metric with cone singularities along $D$ of cone angle $2 \pi \beta$, which converges to a cscK metric of Poincar\'{e} type when $\beta \to 0$.
In addition, we introduce a fixed point formula which characterizes a cscK cone metric.

We consider a polarized manifold $(X, L_X)$ with a smooth hypersurface $D \in | L_X |$.
Let $h_X$ be a Hermitian metric on $L_X$ such that its Chern curvature form multiplied by $\sqrt{-1}$ is a {\kah} form $\theta_X$, i.e., $\theta_X = - \dol \log h_X$.
For a defining section $\sigma_D \in H^0 (X, L_X)$ of $D$, we assume that $\Vert \sigma_D \Vert_{h_X}^{2} <e^{-2}$ by scaling.
We define a function $t(>2)$ on $X \setminus D$ by
\begin{equation}
t := \log \Vert  \sigma_D \Vert _{h_X}^{-2}.
\end{equation}

We set
$$
\underline{S} := \frac{n(c_1(X) - c_1 (L_X)) \cup c_1(L_{X})^{n-1}}{c_1(L_{X})^{n}}, \hspace{7pt} \underline{S}_{D}:= \frac{(n-1)(c_1(X) - c_1(L_X))|_D \cup (c_1(L_{X})|_D)^{n-2}}{(c_1(L_{X})|_D)^{n-1}}.
$$
Note that $\underline{S}$ is the average value of the scalar curvature of {\kah} metrics of Poincar\'{e} type on $X \setminus D$ in the class $[\theta_X]$ and $\underline{S}_{D}$ is the average value of the scalar curvature of {\kah} metrics on $D$ in the class $[\theta_X |_D]$.
Auvray showed the topological constraint for cscK metrics of Poincar\'{e} type in the class $[\theta_X]$.

\begin{thm}[\cite{Au1}]
\label{topological constraint}
If $X \setminus D$ admits a cscK metric of Ponicar\'{e} type, then the following inequality holds :
\begin{eqnarray}
\underline{S}_{D} > \underline{S}.
\end{eqnarray}
\end{thm}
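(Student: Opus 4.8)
The plan is to compare the two averages by exhibiting $\underline{S}_D - \underline{S}$ as an explicit positive intersection number. First I would set $a := c_1(X) - c_1(L_X)$ and $b := c_1(L_X)$, so that by definition
\begin{equation}
\underline{S} = \frac{n\, a \cup b^{n-1}}{b^n}, \qquad \underline{S}_D = \frac{(n-1)\, (a|_D) \cup (b|_D)^{n-2}}{(b|_D)^{n-1}}.
\end{equation}
Since $D \in |L_X|$, the divisor $D$ is Poincar\'e dual to $b = c_1(L_X)$, so every restriction-and-integrate over $D$ becomes a cup product with $b$ over $X$: for any class $\gamma$ of complementary degree, $(\gamma|_D) \text{-integral} = \gamma \cup b$. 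In particular $(b|_D)^{n-1} = b^n$ and $(a|_D)\cup(b|_D)^{n-2} = a \cup b^{n-1}$. Hence the numerator of $\underline{S}_D$ equals $(n-1)\, a \cup b^{n-1}$ while its denominator equals $b^n$, which already shares the denominator of $\underline{S}$.

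With a common denominator $b^n>0$ (positivity of the polarization), the claim $\underline{S}_D > \underline{S}$ reduces to
\begin{equation}
(n-1)\, a \cup b^{n-1} > n\, a \cup b^{n-1},
\end{equation}
i.e. to $a \cup b^{n-1} < 0$. The second step is therefore to show $a \cup b^{n-1} < 0$. Writing $a = c_1(X) - c_1(L_X)$ and using adjunction $c_1(L_X)|_D = c_1(L_D|_D)$ together with $K_D = (K_X + L_X)|_D$, one translates $a \cup b^{n-1}$ into $-\,(K_X + L_X)\cup L_X^{n-1}$ up to the positive factor coming from $c_1(L_X)^{n-1}$; restricting to $D$ this is $-\,\deg_{L_X|_D} K_D$ relative to the polarization, i.e. proportional to $\underline{S}_D$ itself. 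The sign is then forced by the hypothesis: the existence of a cscK metric of Poincar\'e type on $X\setminus D$ constrains the curvature of $D$. Concretely, a Poincar\'e-type cscK metric is complete with cusp asymptotics along $D$, and a standard residue/boundary computation (integrating the scalar curvature against the degenerating volume form, as in Auvray \cite{Au1}) shows that the induced average scalar curvature on $D$ must be negative, forcing $a\cup b^{n-1}<0$.

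The main obstacle is precisely this last sign input: the purely intersection-theoretic manipulation in the first two steps is formal, but concluding $a \cup b^{n-1} < 0$ genuinely uses analysis rather than topology. The hard part will be to justify the negativity rigorously despite the cusp singularity of the metric along $D$ — one must control the boundary terms arising when integrating by parts on the incomplete-looking compactification $X$, showing that the $-\log|z^1|^2$ growth of the potential contributes a definite-sign residue and that no divergent boundary integral survives. I would handle this by working on tubular neighborhoods $\{\Vert\sigma_D\Vert_{h_X} = \epsilon\}$, computing the flux of the scalar-curvature form through these hypersurfaces, and letting $\epsilon \to 0$; the cusp geometry makes the interior term converge to $\underline{S}\cdot b^n$ while the boundary flux limits to a strictly negative multiple of the degree of $D$, yielding the strict inequality.
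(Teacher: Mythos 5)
Your opening reduction is correct: since $D\in|L_X|$ is Poincar\'{e} dual to $c_1(L_X)$, one has $(c_1(L_X)|_D)^{n-1}=c_1(L_X)^n$ and $(c_1(X)-c_1(L_X))|_D\cup(c_1(L_X)|_D)^{n-2}=(c_1(X)-c_1(L_X))\cup c_1(L_X)^{n-1}$, hence $\underline{S}=\frac{n}{n-1}\underline{S}_{D}$, and the desired inequality $\underline{S}_{D}>\underline{S}$ is equivalent to $\underline{S}_{D}<0$. This is exactly the identity the paper records in Remark \ref{negative scalar curvature} --- but used there in the opposite direction, to deduce $\underline{S}_{D}<0$ \emph{from} Theorem \ref{topological constraint}. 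Note also that the paper itself gives no proof of this theorem: it is quoted as a black box from Auvray \cite{Au1}. So the entire mathematical content lies in the one step you postpone to the end.

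That step is a genuine gap, and as written your argument is circular. After your reduction, what remains to prove is precisely that the existence of a Poincar\'{e} type cscK metric forces $\underline{S}_{D}<0$; in this setting ($D\in|L_X|$, class $c_1(L_X)$) that statement \emph{is} the theorem, merely restated. You justify it by saying that ``a standard residue/boundary computation (as in Auvray \cite{Au1}) shows that the induced average scalar curvature on $D$ must be negative'' --- that is, by invoking the result to be proved. The tube-flux sketch does not repair this: the definition of a Poincar\'{e} type metric supplies only quasi-isometry to the cusp model and boundedness of derivatives, and from that information alone the limiting flux through $\{\Vert\sigma_D\Vert_{h_X}=\epsilon\}$ cannot be evaluated, let alone its sign determined; asserting it ``limits to a strictly negative multiple of the degree of $D$'' is assuming the conclusion. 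What actually produces the sign --- and what is visible in this paper's own constructions --- is the refined asymptotic structure of Poincar\'{e} type cscK metrics: such a metric is asymptotic to $\theta_X - a\,\dol\log t$ with a decaying relative potential (Theorem \ref{Auvray's decay}, from \cite{Au3}), positivity along the cusp forces $a>0$, and the cscK equation forces the value of the scalar curvature at infinity, $\underline{S}_{D}-2/a$, to equal $\underline{S}$; this is the mechanism behind the paper's choice $a_0=2/(\underline{S}_{D}-\underline{S})$ and the closing identity of Lemma \ref{first asymptotic}. Hence $\underline{S}_{D}-\underline{S}=2/a>0$. Establishing those asymptotics for an \emph{arbitrary} Poincar\'{e} type cscK metric is the analytic core of \cite{Au1}; your proposal correctly identifies it as ``the hard part'' but does not supply it.
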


\begin{rem}
\label{negative scalar curvature}
Since we assume that $D \in |L_X|$, we have $\underline{S} = \frac{n}{n-1}\underline{S}_{D}$.
So, Theorem \ref{topological constraint} implies the negativity of $\underline{S}_{D}$ under the assumption that there is a cscK metric of Poincar\'{e} type, i.e.,
$$
0 > \underline{S} - \underline{S}_{D} = \frac{n}{n-1}\underline{S}_{D} - \underline{S}_{D} = \frac{1}{n-1} \underline{S}_{D}.
$$
\end{rem}

By Theorem \ref{topological constraint}, we set the following well-defined positive number :
\begin{equation}
a_0 := \frac{2}{\underline{S}_{D} - \underline{S}} > 0.
\end{equation}
For sufficiently large $\lambda > 0$, we directly define a {\kah} metric of Poincar\'{e} type by
\begin{equation}
\theta_X -  a_0 \dol \log (\lambda + t) = \left( 1 - \frac{a_0}{\lambda + t} \right) \theta_X + \frac{a_0 \sqrt{-1} \partial t \wedge \overline{\partial} t}{(\lambda + t)^2}.
\end{equation}
We can compute as follows:
$$
\lambda + t = \lambda + \log \Vert \sigma_D \Vert_{h_X}^{-2} = \log \Vert e^{-\lambda/2} \sigma_D \Vert_{h_X}^{-2}.
$$
By replacing $\sigma_D$ with $e^{-\lambda/2} \sigma_D$, we write $\lambda + t$ as $t$ from now on.
By using this simple symbol $t$, we define a {\kah} metric of Poincar\'{e} type $\tilde{\omega}_{0}$ by
\begin{equation}
\tilde{\omega}_{0} := \theta_X -  a_0 \dol \log t.
\end{equation}

The reason why we put the coefficient $a_0$ is that the scalar curvature $S(\tilde{\omega}_{0})$ is asymptotic to the average value of the scalar curvature $\underline{S}$ (see Lemma \ref{estimate at infinity}).
The properties of cscK (more generally, extremal {\kah}) metrics of Poincar\'{e} type is well studied by Auvray as follows.

\begin{thm}[page 30 of \cite{Au3}]
\label{Auvray's decay}
If $\omega_{0}^{cscK} = \tilde{\omega}_{0} + \dol \varphi_{cscK}$ is a cscK metric of Poincar\'{e} type, then there exists a cscK metric $\theta_{D}^{cscK} = \theta_X |_D + \dol \psi_D$ on $D$.
Moreover, there is $\delta > 0$ such that
\begin{equation}
\varphi_{cscK} = p^* \psi_D + O(t^{-\delta})
\end{equation}
at any differential order near $D$, where $p(z^1, z^2,...,z^n) = (z^2,...,z^n)$ on $(U ; z^1,...,z^n)$ such that $D = \{ z^1 = 0 \}$.
\end{thm}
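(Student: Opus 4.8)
The plan is to follow Auvray's asymptotic analysis of the cusp end \cite{Au3}, treating the constant scalar curvature equation as a perturbation of a product model along $D$. In a chart $(U;z^1,\dots,z^n)$ with $D=\{z^1=0\}$, I would first pass to cylindrical coordinates on the normal factor: writing $\rho=-\log|z^1|$ and $s=\log\rho$, the normal factor of $\omega_{cusp}$ becomes quasi-isometric to a one-dimensional hyperbolic cusp, and since $t\asymp\rho\asymp e^{s}$, a decay rate $O(t^{-\delta})$ is exactly exponential decay $O(e^{-\delta s})$ in $s$, the form produced by an indicial-root analysis on the end. Decomposing $\omega_{0}^{cscK}$ near $D$ into this fixed normal factor, a transverse part in the $z^2,\dots,z^n$ directions, and a remainder, the first step is the leading-order balance of scalar curvature: for a near-product the scalar curvature splits as the normal contribution plus the transverse one. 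The choice $a_0=\frac{2}{\underline{S}_{D}-\underline{S}}$, legitimate by Theorem \ref{topological constraint}, is made precisely so that the normal cusp contributes $\underline{S}-\underline{S}_{D}$, so that the identity $S(\omega_{0}^{cscK})\equiv\underline{S}$ forces the transverse metric to carry the remaining value $\underline{S}_{D}$.

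Next I would extract the transverse limit. Restricting $\omega_{0}^{cscK}$ to the slices $\{|z^1|=\varepsilon\}$ and letting $\varepsilon\to0$, one defines $\psi_D$ as the normal-direction limit of $\varphi_{cscK}$ and sets $\theta_{D}^{cscK}:=\theta_X|_D+\dol\psi_D$. Made precise, the balance above says that passing to the limit in the scalar curvature equation on $X\setminus D$ yields $S(\theta_{D}^{cscK})\equiv\underline{S}_{D}$; positivity of the limiting form follows from quasi-isometry to the model in the transverse directions, so $\theta_{D}^{cscK}$ is a genuine cscK metric on $D$, which gives the existence statement.

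The analytic heart, and the step I expect to be the main obstacle, is the decay estimate at every differential order. Setting $u:=\varphi_{cscK}-p^*\psi_D$, the a priori control from the class $[\theta_X]$ only gives $u=O(1+\log(-\log|z^1|))=O(1+s)$, i.e. at most linear growth in the cylindrical variable, and the goal is to replace this by a bounded leading term, already absorbed into $p^*\psi_D$, plus genuine decay $O(e^{-\delta s})$. The function $u$ solves the full constant scalar curvature equation, whose linearization at the product model $\omega_{cusp}\oplus\theta_{D}^{cscK}$ is the corresponding Lichnerowicz operator, with a nonlinear remainder quadratic in $u$ and its derivatives. Separating variables in $s$ and combining the Fourier modes in the normal argument with the spectral decomposition on $D$ reduces the linearized operator to a family of ODE operators whose indicial roots dictate the admissible exponents; the zero-mode carries a double root at $0$ producing the constant and the linear-in-$s$ solutions, and the content is that the linear part vanishes while the constant part is exactly $p^*\psi_D$, after which the remainder decays at the first root bounded away from $0$. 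Granting this spectral gap, I would run the standard weighted scheme: a parametrix together with a maximum-principle argument on the weighted \hol{} spaces $C^{k,\alpha}_{\eta}(X\setminus D)$ of Definition \ref{holder space} upgrades the a priori bound to $u=O(t^{-\delta})$, and elliptic bootstrapping together with a fixed-point iteration absorbing the quadratic remainder promotes the estimate to all differential orders. The delicate point is the indicial-root computation and the verification of the gap; once that is in place, the bootstrap and the nonlinear iteration are routine within the weighted framework.
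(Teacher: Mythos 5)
First, a point about the comparison itself: the paper contains \emph{no proof} of this statement. It is quoted from Auvray (page 30 of \cite{Au3}) and used as a black box, both to normalize $\theta_X|_D$ to be the cscK metric on $D$ (so that one may take $\psi_D=0$) and to supply the decay exponent $\delta$ of Definition \ref{three conditions}. So your proposal can only be measured against Auvray's argument in \cite{Au3}, and judged on its own it is an outline of the correct general framework --- cusp as a cylinder in $s=\log t$, decay $O(t^{-\delta})$ as exponential decay in $s$, weighted H\"older spaces, indicial roots --- in which the steps carrying the entire content of the theorem are exactly the ones you defer or assert.

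Concretely, there are three gaps. (a) The transverse limit: you define $\psi_D$ by restricting $\varphi_{cscK}$ to the slices $\{|z^1|=\varepsilon\}$ and letting $\varepsilon\to 0$, but nothing in the hypotheses makes this limit exist: the class condition gives only $\varphi_{cscK}=O(1+\log t)$ with derivatives bounded with respect to the Poincar\'e model, which is compatible with slice restrictions that oscillate and never converge. Producing the limit, proving it is smooth and positive on $D$, and proving it solves the cscK equation there is a substantial part of \cite{Au3}, not a formality. (b) The logarithmic mode: your indicial computation is correct --- the zero mode of the fourth-order linearized operator has a double indicial root at $0$, with solutions $1$ and $s=\log t$ --- but the $\log t$ solution is \emph{permitted} by the a priori growth bound, so no spectral gap bounded away from $0$ can exclude it; local ODE analysis on the end cannot decide its coefficient. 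Excluding it is equivalent to showing that the cusp opening of $\omega_{0}^{cscK}$ is exactly $a_0$, a global statement coming from the scalar-curvature balance. Your ``leading-order balance'' step instead assumes from the start that the normal factor is $a_0\sqrt{-1}\,\partial t\wedge\overline{\partial}t/t^2$, i.e. assumes the conclusion; a Poincar\'e-type metric in the class $[\theta_X]$ can a priori have any cusp opening, since $-a\dol\log t$ does not change the class. Thus ``granting this spectral gap'' amounts to granting the theorem. (c) The closing nonlinear step is not routine: $u=\varphi_{cscK}-p^*\psi_D$ is a priori unbounded (it may grow like $\log t$), so the equation for $u$ cannot be treated as a perturbation of the linearization at the product model until the growth has already been improved --- which is again the point at issue --- and the invoked ``maximum-principle argument'' is unavailable for the fourth-order Lichnerowicz operator. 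In short, the proposal reproduces the scaffolding of Auvray's analysis but leaves its mathematical core unproved.
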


In this paper, we assume the non-existence of nontrivial holomorphic vector field on $D$, so a cscK metric on $D$ is unique in the {\kah} class $c_1 (L_X |_D)$ (\cite{Do}).
So, by replacing $\theta_X$ so that the restricted {\kah} metric $\theta_X |_D$ is a cscK metric, we can consider that the function $\varphi_{cscK}$ in Theorem \ref{Auvray's decay} decays near $D$ because we may assume that $\psi_D = 0$ from the uniqueness of cscK metrics on $D$.
Through out this paper, we fix the following notations.
\begin{definition}
\label{three conditions}
We assume that
\begin{equation}
\omega_{0}^{cscK} := \tilde{\omega}_{0} + \dol \varphi_{cscK} = \theta_X - a_0 \dol \log t + \dol \varphi_{cscK}
\end{equation}
is a cscK metric on $X\setminus D$ of Poincar\'{e} type.
By Theorem \ref{Auvray's decay},
\begin{itemize}
\item $\theta_X |_D$ is a cscK metric on $D$ and
\item the positive number $\delta > 0$ satisfies $\varphi_{cscK} = O(t^{-\delta})$ at any differential order near $D$.
\end{itemize}
\end{definition}

Next, we define a {\kah} metric with cone singularities approximating $\omega_{0}^{cscK}$.
Set
\begin{eqnarray}
\underline{S}_{\beta} &:=& \frac{nc_1(X,D,\beta) \cup c_1 (L_{X})^{n-1}}{c_1 (L_{X})^{n}},\\
\underline{S}_{D, \beta} &:=& \frac{(n-1)c_1(X,D,\beta)|_D \cup (c_1(L_{X})|_D)^{n-2}}{(c_1(L_{X})|_D)^{n-1}}.
\end{eqnarray}
Recall that $c_1 (X,D,\beta) = c_1 (X) - (1-\beta)c_1 (L_X)$ for the smooth divisor $D \in | L_X |$.
For sufficiently small $\beta>0$, we define the following well-defined number :
\begin{equation}
a_\beta := \frac{2}{\underline{S}_{D,\beta} - \underline{S}_{\beta}} > 0.
\end{equation}
Define functions $f_\beta (t)$ on $X \setminus D$ dy
\begin{equation}
f_\beta (t) := \frac{\beta a_\beta}{e^{\beta t} - 1}, \hspace{7pt} f_0 (t) := \frac{a_0}{t}.
\end{equation}
In addition, we set
\begin{equation}
G_\beta (t) := \int_{2}^{t} f_\beta (y) dy =\left\{
\begin{array}{ll}
a_\beta \log \left( (1- e^{-t \beta} )/(1 - e^{-2\beta}) \right) & (\beta > 0)\\
a_0 ( \log t - \log 2 ) & (\beta = 0).
\end{array}
\right.
\end{equation}
Since $(1 - e^{-\beta y})/ \beta \to y \hspace{5pt} (\beta \to 0)$, we have $G_\beta (t) \to a_0 ( \log t - \log 2 ) $ and $f_\beta (t) \to f_0 (t)$ as $\beta \to 0$ locally.
By the direct computation, we have
\begin{eqnarray}
\theta_X - \dol G_\beta (t)
&:=& \left( 1 - f_\beta (t) \right) \theta_X + \dot{f}_\beta (t) \sqrt{-1} \partial t \wedge \overline{\partial} t\\
&\hspace{-20pt}=&\hspace{-20pt} \left( 1 - \frac{\beta a_\beta}{e^{\beta t} - 1} \right) \theta_X +  \left( \frac{\beta}{1-e^{-\beta t}} \right)^2 a_\beta e^{-\beta t} \sqrt{-1} \partial t \wedge \overline{\partial} t. \label{last}
\end{eqnarray}
Note that $\theta_X - \dol G_\beta (t)$ is a {\kah} cone metric of cone angle $2 \pi \beta$ since the quadratic term $e^{-\beta t} \sqrt{-1} \partial t \wedge \overline{\partial} t$ included in (\ref{last}) is quasi isometric to $ \sqrt{-1} d z^1 \wedge d \overline{z^1}/|z^1|^{2(1 - \beta) }$ near $D = \{ z^1 = 0 \}$.
Here, we write $\Vert  \sigma_D \Vert ^{2}_{h_X} = |z^1 |^2 e^{-a}$ for some smooth function $a$.

\begin{definition}[The background {\kah} cone metric]
We define a {\kah} cone metric by
\begin{equation}
\omega_\beta := \theta_X - \dol G_\beta (t) + \dol \varphi_{cscK},
\end{equation}
where $\varphi_{cscK}$ is the function on $X \setminus D$ in Definition \ref{three conditions}.
\end{definition}

\begin{rem}
Note that $\omega_\beta$ is a {\kah} metric with cone singularities of angle $2\pi \beta$ along $D$ since the function $\varphi_{cscK}$ decays near $D$ (see Definition \ref{three conditions}) and does not affect the asymptotic behavior of the {\kah} cone metric $\theta_X - \dol G_\beta (t)$.
Moreover, $\omega_\beta$ converges locally to $\omega_{0}^{cscK}$ as $\beta \to 0$.
(In general, $\omega_\beta$ is not a cscK cone metric.)
\end{rem}

\begin{rem}
In \cite{Gu}, Guenancia uses the function $\log \left( (1 -  e^{- t\beta})/\beta \right) $ in order to define a {\kah} metric with cone singularities.
So, the function $G_\beta (t)$ is equal to the function in \cite{Gu} up to the coefficient $a_\beta$ (and additive constants).
The reason why we put coefficient $a_\beta$ is that we consider the construction of a cscK cone metric in the \textbf{fixed} cohomology class $c_1 (L_X)$ and the scalar curvature of the background cone metric $\omega_\beta$ is asymptotic to $\underline{S}_{\beta}$.
In \cite{Gu}, the cohomology class which contains a {\kah}-Einstein metric with cone singularities, 
is $c_1 (K_X) + (1-\beta)c_1 (D)$, so it depends on cone angle $2 \pi \beta$.
In this case, the average values $\underline{S}_{\beta}$ and $\underline{S}_{D, \beta}$ are equal to $n$ and $n-1$ respectively and the corresponding coefficient is $2$ (independent of $\beta$  !).
Thus, this problem of the setting of coefficients does not arise in \cite{Gu}.
\end{rem}

Finally, we consider the fixed point formula on $C^{4,\alpha}_{\eta}$ which characterizes a cscK cone metric.
For a function $\phi_\beta \in C^{4,\alpha}_{\eta} , \eta <0$, we consider the following expansion:
\begin{equation}
\label{fixed point}
S (\omega_\beta + \dol \phi_\beta) = S (\omega_\beta) + L_{\omega_\beta}(\phi_\beta)  + Q_{\omega_\beta}(\phi_\beta).
\end{equation}
Here, $L_{\omega_\beta} : C^{4,\alpha}_{\eta} \to C^{0,\alpha}_{\eta}$ is the lineariztion of the scalar curvature operator and $Q_{\omega_\beta}$ is the remaining nonlinear term.
We characterize a solution of the equation:
\begin{equation}
\label{cscK 1}
S (\omega_\beta + \dol \phi_\beta) = \underline{S}_\beta, \hspace{10pt}  \phi_\beta \in C^{4,\alpha}_{\eta},
\end{equation}
as a fixed point $\phi_\beta \in C^{4,\alpha}_{\eta}$ given by
\begin{equation}
\phi_\beta = - L_{\omega_\beta}^{-1} (S (\omega_\beta) - \underline{S}_\beta + Q_{\omega_\beta}(\phi_\beta)).
\end{equation}
Note that $\omega_\beta + \dol \phi_\beta$ in (\ref{cscK 1}) is a cscK cone metric by Lemma \ref{cscK cone lemma}.

\begin{rem}
For a {\kah} cone metric of angle $2 \pi \beta$, there are the function spaces denoted by $C^{k,\alpha,\beta}$ for $k=0,1,...,4$, defined by Donaldson \cite{Do2} and Li-Zheng \cite{LZ}.
By considering these function space, we can show the existence of a cscK cone metric on $X \setminus D$ under the assumption that $D$ is of sufficiently large degree (\cite{AHZ}).
In our case, we consider the functional space $C^{k,\alpha}_{\eta}$ since we want to use Sektnan's result \cite{Se} later.
\end{rem}


\section{cscK cone metrics and convergence}

In this section, we prove Theorem \ref{cone to cusp} by constructing a fixed point in $C^{4,\alpha}_{\eta}$, which characterizes a cscK cone metric (\ref{fixed point}).
From now on, we fix the exponent $\alpha \in (0,1)$.
In order to show that, we study the following.\\
1. The weighted {\hol} norm of the scalar curvature of the {\kah} cone metric $\omega_\beta$, i.e., $\Vert  S(\omega_\beta) - \underline{S}_{\beta} \Vert _{C^{0,\alpha}_{\eta}}$ can be made small arbitrarily as $\beta \to 0$.
Here, the weight $\eta $ is independent of $\beta$.\\
2. The linearization of the scalar curvature operator has a bounded inverse, i.e., there is $L_{\omega_\beta}^{-1}$ and its operator norm is independent of cone angle $2 \pi \beta$.

\subsection{The estimate of the scalar curvature}

In this subsection, we prove the following estimate.

\begin{prop}
\label{the norm of scalar curvature}
For a weight $\eta \in ( - \delta,0)$, we have
\begin{equation}
\label{global estimate}
\Vert  S(\omega_\beta) - \underline{S}_{\beta} \Vert _{C^{0,\alpha}_{\eta}(X \setminus D)} = O((- \beta \log \beta)^{\eta+\delta}).
\end{equation}
Here, the number $\delta$ is given in Definition \ref{three conditions}.
\end{prop}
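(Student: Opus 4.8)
\emph{Strategy and reduction.} The plan is to organise the whole computation around the Poincar\'{e} type cscK metric $\omega_{0}^{cscK}$, whose scalar curvature is the constant $\underline{S}$. Writing $h:=G_\beta(t)-G_0(t)$, so that $\omega_\beta=\omega_{0}^{cscK}-\dol h$, and recalling $c_1(X,D,\beta)=c_1(X)-(1-\beta)c_1(L_X)$ (whence a direct intersection computation gives $\underline{S}_\beta=\underline{S}+n\beta$), one has
\[
S(\omega_\beta)-\underline{S}_\beta=\Delta S-n\beta,\qquad \Delta S:=S(\omega_\beta)-S(\omega_{0}^{cscK}).
\]
The essential structural point is that $\Delta S\to n\beta$ as $t\to\infty$ (this is the content of the pointwise ``estimate at infinity''), so that the constant $n\beta$ is exactly what cancels the limit of $\Delta S$: the full error $S(\omega_\beta)-\underline{S}_\beta$ then has no constant part and genuinely decays near $D$. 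Since a nonzero constant does not lie in $C^{0,\alpha}_\eta$ for $\eta<0$, this cancellation is precisely what makes the weighted norm finite, and it is the decay rate of $\Delta S-n\beta$ that we must quantify. I would compute $\Delta S$ through the linearisation along the segment $\omega_s:=\omega_{0}^{cscK}-s\,\dol h$, namely $\Delta S=-\int_0^1 L_{\omega_s}(h)\,ds$, after first checking that each $\omega_s$ is a \kah\ cone metric of angle $2\pi\beta$ uniformly equivalent to $\omega_\beta$ (the two endpoints are comparable and the radial profiles interpolate monotonically).

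\emph{Key pointwise estimates.} The first quantitative input is an elementary bound on the radial profile. Writing $u=\beta t$ and using $a_\beta=a_0+O(\beta)$, differentiation of $f_\beta(t)=\beta a_\beta/(e^{\beta t}-1)$ and $f_0(t)=a_0/t$ yields
\[
\Bigl|\tfrac{d^k}{dt^k}h(t)\Bigr|\ \lesssim\ \Bigl(\tfrac{\beta}{1+\beta t}\Bigr)^{k},\qquad k=1,2,3,4,
\]
so that $\Delta S$, which depends on $h$ only through its derivatives (the metric change $\dol h$ does not see $h$ itself), is governed by these sizes. The second input is the asymptotically fibred structure near $D$: since $\theta_X|_D$ is cscK (Definition \ref{three conditions}) and the coefficient $a_\beta=2/(\underline{S}_{D,\beta}-\underline{S}_\beta)$ is chosen precisely so that the radial, Calabi-type, part of the scalar curvature equation produces the constant $\underline{S}_\beta$, the leading terms of $S(\omega_\beta)$ cancel and one is left with a transverse remainder controlled by $\varphi_{cscK}=O(t^{-\delta})$ together with its derivatives (Theorem \ref{Auvray's decay}) and a radial remainder controlled by the display above. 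Feeding these into $L_{\omega_s}$ and the nonlinear term $Q_{\omega_s}$, and using that $\omega_s$ is Poincar\'{e}-like for $t\lesssim 1/\beta$ and genuinely conical for $t\gtrsim 1/\beta$, I would obtain a pointwise bound for $S(\omega_\beta)-\underline{S}_\beta$ that is $O(\beta)$ on a fixed compact region and carries the $t^{-\delta}$ decay produced by $\varphi_{cscK}$ across the transition.

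\emph{Assembling the weighted norm.} It then remains to locate the maximum of $t^{-\eta}\,|S(\omega_\beta)-\underline{S}_\beta|$. The fibre part of $\omega_\beta$ agrees with that of $\omega_{0}^{cscK}$ to leading order exactly when $\beta t\lesssim 1$ and collapses relative to it once $\beta t\gg 1$; hence the error is $O(\beta)$ on $\{t\lesssim 1/\beta\}$ and decays as $t\to\infty$ by the conical asymptotics $S(\omega_\beta)\to\underline{S}_\beta$. A careful balancing of the growing weight $t^{-\eta}=t^{|\eta|}$ against the decaying factor $t^{-\delta}$ across the transition scale $t\sim 1/\beta$ then produces the claimed rate $O((-\beta\log\beta)^{\eta+\delta})$, which is finite precisely because $\eta+\delta>0$ for $\eta\in(-\delta,0)$. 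For the weighted \hol\ seminorm I would repeat the estimate after pulling back by the quasi-coordinate maps $\Phi_\delta$ (uniformly equivalent to the Euclidean model) and differentiating the same explicit radial quantities once more; since all the bounds above hold at every order, this gives the $\alpha$-\hol\ control with no loss in the power of $-\beta\log\beta$.

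\emph{Main obstacle.} The crux is the uniform-in-$\beta$ analysis in the transition region $\beta t\sim 1$, where neither the Poincar\'{e} model nor the flat cone model dominates. There the cancellation of the leading part of $S(\omega_\beta)$ rests simultaneously on the choice of $a_\beta$ (radial direction) and on $\theta_X|_D$ being cscK (transverse direction), and it must be tracked to the order that produces the sharp exponent $\eta+\delta$ together with the logarithmic factor. In tandem, the operators $L_{\omega_s}$ and the remainder $Q_{\omega_s}$ must be estimated with constants independent of the cone angle, which in turn requires verifying that the interpolating metrics $\omega_s$ stay uniformly quasi-isometric to the cone model throughout $s\in[0,1]$. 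Establishing these uniform bounds, rather than the more routine region-by-region estimates, is the technical heart of the argument.
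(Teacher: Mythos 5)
Your global skeleton is the same as the paper's: you decompose $S(\omega_\beta)-\underline{S}_\beta=[S(\omega_\beta)-S(\omega_0^{cscK})]-n\beta$ using $\underline{S}_\beta=\underline{S}+n\beta$, bound the difference by $O(\beta)$ in a region where $\beta t$ is small, use the decay produced by $\varphi_{cscK}$ near $D$, and balance the weight $t^{-\eta}$ against the decay $t^{-\delta}$ at a transition scale; this is exactly the structure of Proposition \ref{le}, Lemma \ref{local estimate}, Lemma \ref{estimate at infinity} and the proof of Proposition \ref{the norm of scalar curvature}. The genuine gap is in \emph{how} you propose to estimate $\Delta S=S(\omega_\beta)-S(\omega_0^{cscK})$ near $D$, and it is fatal to the method as written. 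You write $\Delta S=-\int_0^1 L_{\omega_s}(h)\,ds$ along $\omega_s=\omega_0^{cscK}-s\dol h=(1-s)\omega_0^{cscK}+s\omega_\beta$ and assert that each $\omega_s$ is a {\kah} cone metric of angle $2\pi\beta$ uniformly equivalent to $\omega_\beta$. This is false. The radial coefficient of $\omega_s$ is $(1-s)a_0t^{-2}+s\,O(\beta^2e^{-\beta t})$, and since $e^{-\beta t}\ll t^{-2}$ as $t\to\infty$, the cusp term dominates for every fixed $s<1$: each $\omega_s$ with $s<1$ is of Poincar\'{e} type near $D$ (your statement that $\omega_s$ is ``Poincar\'{e}-like for $t\lesssim 1/\beta$ and genuinely conical for $t\gtrsim 1/\beta$'' is exactly backwards), and the family is not uniformly quasi-isometric to $\omega_\beta$, the ratio of radial parts being $\sim (1-s)a_0\,e^{\beta t}/(a_\beta\beta^2t^2)\to\infty$. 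Worse, this cannot be repaired by better bookkeeping: since $\theta_X|_D$ is cscK, the scalar curvature of $\omega_s$ tends along $D$ to $\underline{S}_D-2/\bigl((1-s)a_0\bigr)$ for every $s<1$, which diverges to $-\infty$ as $s\to1^-$, while at $s=1$ the limit jumps to $\underline{S}_\beta$. Hence at a point with $t$ large the function $s\mapsto S(\omega_s)$ dips to enormously negative values before returning to $\approx\underline{S}_\beta$, so $\int_0^1|L_{\omega_s}(h)|\,ds\to\infty$ as $t\to\infty$ even though the signed integral stays bounded. The smallness of $\Delta S-n\beta$ near $D$ is therefore produced by cancellation in $s$, which no uniform pointwise bound on the integrand (this is what your estimates $|d^kh/dt^k|\lesssim(\beta/(1+\beta t))^k$ are meant to feed into $L_{\omega_s}$ and $Q_{\omega_s}$) can ever detect. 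Your method cannot yield the required estimate $S(\omega_\beta)-\underline{S}_\beta=O(t^{-\delta})$ on the region $t\gtrsim 1/\beta$, which is where the exponent $\eta+\delta$ comes from.

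The paper avoids this trap by never interpolating between the cusp and the cone metric near $D$: Lemma \ref{first asymptotic} computes $S(\theta_X-\dol G_\beta(t))-\underline{S}_\beta=O(e^{(\beta-1)t})$ directly from the explicit volume and Ricci forms of the cone model (using the cscK property of $\theta_X|_D$ and the choice of $a_\beta$), and Lemma \ref{estimate at infinity} then perturbs \emph{around that cone metric} by the decaying potential $\varphi_{cscK}=O(t^{-\delta})$, so the linearization used near $D$ is taken at a genuine cone metric. If you want to keep a perturbative argument near $D$, the base point of the expansion must be $\theta_X-\dol G_\beta(t)$ with increment $\varphi_{cscK}$, not $\omega_0^{cscK}$ with increment $-h$. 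Two smaller corrections: first, your pointwise bounds on $h$ fail for $k\geq2$, since $a_\beta-a_0\sim a_0^2\beta/2$ gives $\ddot h=-(a_\beta-a_0)/t^2+O(\beta^2)$, which is of size $\beta/t^2\gg\beta^2$ for $2\leq t\ll\beta^{-1/2}$; the correct bounds carry additional $O(\beta t^{-k})$ terms (harmless for the final estimate, but your stated bounds are wrong). Second, your parenthetical claim that the region-by-region estimates give the rate $O((-\beta\log\beta)^{\eta+\delta})$ from the transition at $t\sim1/\beta$ is imprecise: that transition would give $O(\beta^{\eta+\delta})$; the logarithmic factor in the paper comes from its choice of transition scale $t\sim(-\beta\log\beta)^{-1}$, at which its $O(\beta)$ estimate on $V_\beta$ is established.
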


In order to prove this proposition, we divide $X \setminus D$ into a neighborhood of $D$ and a region away from $D$.
Firstly, we study the asymptotic behavior of the scalar curvature of the {\kah} metric $\theta_X - \dol G_\beta (t)$ on a neighborhood of $D$.

\begin{lemma}
\label{first asymptotic}
If $\theta_D := \theta_X |_D$ is a cscK metric on $D$, we have
\begin{equation}
S( \theta_X - \dol G_\beta (t) ) - \underline{S}_\beta = O(\Vert  \sigma_D \Vert ^{2(1-\beta)}) = O(e^{( \beta - 1)t})
\end{equation}
as $t \to \infty$ $($equivalently, $\sigma_D \to 0)$.
\end{lemma}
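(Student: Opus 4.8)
The plan is to compute the scalar curvature of the cone metric $\theta_X - \dol G_\beta(t)$ directly and show that near $D$ it converges to $\underline{S}_\beta$ at the claimed exponential rate. First I would record the structure of the metric from equation (\ref{last}): writing $g_\beta := \theta_X - \dol G_\beta(t)$, it splits as a ``horizontal'' piece $(1 - f_\beta(t))\theta_X$ that degenerates mildly toward $D$ plus a ``fiber'' piece $\dot f_\beta(t)\,\sqrt{-1}\,\partial t \wedge \overline{\partial} t$ carrying the cone singularity. The key quantitative inputs are the asymptotics of the profile functions: as $t \to \infty$ one has $f_\beta(t) = \beta a_\beta e^{-\beta t} + O(e^{-2\beta t})$ and $\dot f_\beta(t) = -\beta^2 a_\beta e^{-\beta t} + O(e^{-2\beta t})$, so both $f_\beta$ and the quantity governing the fiber direction decay like $e^{-\beta t}$, while the overall deviation of the volume form from the product structure is controlled by $\Vert \sigma_D \Vert_{h_X}^{2(1-\beta)} = e^{(\beta-1)t}$. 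I would set up local coordinates $(z^1,\dots,z^n)$ with $D = \{z^1 = 0\}$ and $\Vert \sigma_D \Vert^2_{h_X} = |z^1|^2 e^{-a}$, so that $t = -\log|z^1|^2 + a$, and change to the quasi-isometric cone coordinate in the $z^1$-direction to make the singular factor explicit.

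The main step is the scalar curvature computation. I would use the formula $S(g_\beta) = -\operatorname{tr}_{g_\beta}\operatorname{Ric}(g_\beta)$, together with the Ricci identity $\operatorname{Ric}(g_\beta) = -\dol \log(g_\beta^n/(\text{reference volume}))$. The point is that $\underline{S}_\beta$ is precisely the average of the scalar curvature in the class, equivalently $n\,c_1(X,D,\beta)\cdot c_1(L_X)^{n-1}/c_1(L_X)^n$, so subtracting it amounts to comparing $\operatorname{Ric}(g_\beta)$ against the cohomological representative $\theta$ of $c_1(X,D,\beta)$. Because $\theta_D = \theta_X|_D$ is cscK on $D$ by hypothesis, the leading-order transverse data along $D$ organizes so that the scalar curvature of $g_\beta$ restricted to the $D$-directions already contributes exactly $\underline{S}_\beta$ in the limit, and the discrepancy comes entirely from subleading terms in the expansion. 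I would track how the coefficient $a_\beta = 2/(\underline{S}_{D,\beta} - \underline{S}_{\beta})$ is calibrated so that the $O(1)$ and $O(t^{-1})$-type contributions match $\underline{S}_\beta$ and cancel, leaving a remainder whose size is dictated by the fastest-decaying genuinely non-constant term, namely the $e^{(\beta-1)t}$ factor entering through the mixed $\partial\overline\partial$ terms of the product-breaking part of the metric.

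The hard part will be verifying that the cancellation is clean to the order claimed, i.e.\ that \emph{all} contributions decaying slower than $e^{(\beta-1)t}$ (in particular the $e^{-\beta t}$-order terms from $f_\beta$ and $\dot f_\beta$) either reconstitute $\underline{S}_\beta$ exactly or cancel among themselves once the cscK condition on $\theta_D$ is imposed. Concretely, I expect the computation to produce the scalar curvature as a sum of (i) a term approaching the transverse average $\underline{S}_{D,\beta}$ weighted against the fiber geometry, (ii) a term from the $t$-derivatives of $G_\beta$ playing the role of the fiber scalar curvature and the cross terms, and (iii) a genuinely decaying error. The calibration of $a_\beta$ is exactly what forces (i) and (ii) to combine into $\underline{S}_\beta$, and the cscK hypothesis on $D$ is what removes the otherwise-present $O(t^{-1})$ obstruction coming from $S(\theta_D) - \underline{S}_D$. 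Once this bookkeeping is done, the dominant surviving error is $O(e^{(\beta-1)t}) = O(\Vert\sigma_D\Vert^{2(1-\beta)})$, giving the stated estimate; the bound on derivatives (hence the full statement at any differential order) would follow by differentiating the same expansion and noting each derivative in the cone coordinate preserves the exponential decay rate.
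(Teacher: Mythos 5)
Your overall strategy coincides with the paper's: expand the volume form of $g_\beta := \theta_X - \dol G_\beta(t)$, pass to the Ricci form, take traces in coordinates adapted to the cone direction, and let the calibration of $a_\beta$ together with the cscK hypothesis on $\theta_D$ produce the constant $\underline{S}_\beta$. The roles you assign to $a_\beta$ and to $S(\theta_D) = \underline{S}_D$ are the correct ones. However, the step you yourself label ``the hard part'' --- that \emph{every} contribution of order $e^{-k\beta t}$ cancels exactly, rather than being merely small --- is precisely the content of the lemma, and your proposal contains no mechanism for it. This matters quantitatively: near $D$ one has $e^{-\beta t} = \Vert \sigma_D \Vert^{2\beta}$, which for small $\beta$ decays far more slowly than the claimed error $\Vert \sigma_D \Vert^{2(1-\beta)}$; any surviving term at order $e^{-\beta t}$ would only give $S(g_\beta) - \underline{S}_\beta = O(e^{-\beta t})$, and this weaker rate is useless downstream: in Proposition \ref{the norm of scalar curvature} the weighted norm of such a term on the region $\{ t \geq (-\beta\log\beta)^{-1} \}$ behaves like $\beta^{\eta}$, which blows up as $\beta \to 0$ instead of tending to $0$.

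The paper's mechanism, which your plan is missing, is an exact algebraic identity rather than an order-by-order expansion. From the explicit formula for the volume form, the only term in ${\rm Ric}(g_\beta)$ carrying the $e^{-k\beta t}$-dependence is $2\dol \log (1 - e^{-\beta t})$, and since $G_\beta(t) = a_\beta \log \bigl( (1-e^{-\beta t})/(1-e^{-2\beta}) \bigr)$ this term equals exactly $\tfrac{2}{a_\beta}\, \dol G_\beta(t)$, i.e.\ a constant multiple of $\dol G_\beta(t) = \theta_X - g_\beta$; this is exactly where the definition $a_\beta = 2/(\underline{S}_{D,\beta} - \underline{S}_\beta)$ enters. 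Consequently its trace is that constant times $({\rm tr}_{g_\beta}\theta_X - n)$, with no $e^{-\beta t}$-order remainder at all, and what is left is to show ${\rm tr}_{g_\beta}\theta_X = (n-1) + O(\Vert \sigma_D \Vert^{2(1-\beta)})$ and ${\rm tr}_{g_\beta}(\beta\theta_X + {\rm Ric}\,\theta_X) = \beta(n-1) + S(\theta_D) + O(\Vert \sigma_D \Vert^{2(1-\beta)})$. The paper does this with the Schur-complement block inversion formula in normal coordinates: the inverse metric is block diagonal up to $O(|z^1|^{2(1-\beta)})$, with cone entry $|z^1|^{2(1-\beta)}/a_\beta$, so the trace of any bounded smooth $(1,1)$-form equals its trace along $D$ plus an error of the claimed size. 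The constants then sum to $\underline{S}_\beta$ via $\underline{S}_{D,\beta} = \underline{S}_D + \beta(n-1)$. Without identifying this exact proportionality (or an equivalent device), your proposed bookkeeping does not establish the stated rate. A minor further point: the obstruction removed by the cscK hypothesis on $\theta_D$ is a non-decaying $O(1)$ term, the pullback of $S(\theta_D) - \underline{S}_D$, not an $O(t^{-1})$ one.
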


\begin{proof}
From (\ref{last}), the volume form of $ \theta_X - \dol G_\beta (t) $ can be written as
\begin{eqnarray}
(\theta_X - \dol G_\beta (t))^n
&=&  ( 1 + O(\Vert  \sigma_D \Vert ^2) )   \left( \frac{\beta}{1-e^{-\beta t}} \right)^2 n a_\beta e^{-\beta t} \theta_{X}^{n-1} \wedge \partial t \wedge \overline{\partial} t \
\end{eqnarray}

So, the Ricci form of this {\kah} metric is
\begin{eqnarray*}
&&{\rm Ric} (\theta_X - \dol G_\beta (t))\\
&=& - \dol \log ( 1 + O(\Vert  \sigma_D \Vert ^2) )  + 2 \dol \log (1 - e^{- \beta t}) + \beta \theta_X + {\rm Ric}\theta_X \\
&=& - \dol \log ( 1 + O(\Vert  \sigma_D \Vert ^2) )  + \dol \left( (\underline{S}_{\beta} - \underline{S}_{D,\beta} ) G_\beta (t) \right) + \beta \theta_X + {\rm Ric}\theta_X. \
\end{eqnarray*}
In order to obtain the following three estimates;
\begin{equation}
\label{estimate 1}
{\rm tr}_{\theta_X - \dol G_\beta (t)} \dol \log ( 1 + O(\Vert  \sigma_D \Vert ^2)  = O(\Vert  \sigma_D \Vert ^{2(1-\beta)} ),
\end{equation}
\begin{equation}
\label{estimate 2}
{\rm tr}_{\theta_X - \dol G_\beta (t)} \dol \left( (\underline{S}_{\beta} - \underline{S}_{D,\beta} ) G_\beta (t) \right) = \underline{S}_{\beta} - \underline{S}_{D,\beta} + O(\Vert  \sigma_D \Vert ^{2(1-\beta)} ),
\end{equation}
\begin{equation}
\label{estimate 3}
{\rm tr}_{\theta_X - \dol G_\beta (t)} ( \beta  \theta_X + {\rm Ric}\theta_X ) = \beta(n-1) + S(\theta_D) + O(\Vert  \sigma_D \Vert ^{2(1-\beta)}),
\end{equation}
we use the following linear algebraic result (see \cite[p.24]{Zhan} and \cite[\S 3.1 and \S 3.2]{Ao}).
We consider the following matrix
\begin{eqnarray*}
T = \left[ 
\begin{array}{cc}
A & B \\
C & D \\
\end{array} 
\right]
\end{eqnarray*}
and assume that $D$ and $S := A - B D^{-1} C$ are invertible.\
Then, $T$ is invertible and the inverse matrix of $T$ is given by
\begin{eqnarray}
\label{inverse matrix}
T^{-1} = \left[ 
\begin{array}{cc}
S^{-1} & - S^{-1} B D^{-1} \\
- D^{-1} C S^{-1} &  D^{-1} + D^{-1} C S^{-1} B D^{-1}\\
\end{array} 
\right].
\end{eqnarray}
By taking normal holomorphic coordinates in \cite[Prop 5]{Ao}, we can write as
\begin{eqnarray*}
\theta_X - \dol G_\beta (t) = \left[ 
\begin{array}{cc}
\frac{a_\beta}{|z^1|^{2(1 - \beta)}} & 0 \\
0 &  g_{i \overline{j}}  \\
\end{array} 
\right] + O(|z^1|^2)
\end{eqnarray*}
near $D$, where we write $\theta_X = i g_{i \overline{j}} dz^i \wedge d \overline{z}^j$.
By applying the formula (\ref{inverse matrix}), we have
\begin{eqnarray*}
\left( \theta_X - \dol G_\beta (t) \right)^{-1} = \left[ 
\begin{array}{cc}
\frac{|z^1|^{2(1 - \beta)}}{a_\beta} & 0 \\
0 &  g^{i \overline{j}}  \\
\end{array} 
\right] + O(|z^1|^{2(1-\beta)}).
\end{eqnarray*}
Here, $(g^{i \overline{j}}) = (g_{i \overline{j}})^{-1}$.
Thus, we obtain directly the estimate (\ref{estimate 1}) and (\ref{estimate 2}).

Since the scalar curvature $S(\theta_D)$ is the trace of the Ricci form ${\rm Ric} (\theta_D)$, we can obtain similarly the estimate (\ref{estimate 3}).
Note that $\theta_D$ is a cscK metric on $D$, i.e., $S(\theta_D ) = \underline{S}_{D}$.

Therefore, we have
$$
 \underline{S}_{\beta} - \underline{S}_{D,\beta} + \beta(n-1) + \underline{S}_{D} = \underline{S}_{\beta}.
$$
Thus, we have finished the proof of this lemma.
\end{proof}

Recall the definition of $\omega_\beta$ :
$$
\omega_\beta := \theta_X - \dol G_\beta (t) + \dol \varphi_{cscK}.
$$
Since $S(\omega_\beta) = S(\theta_X - \dol G_\beta (t) ) + L_{\omega_\beta} ( \varphi_{cscK}) + Q_{\omega_\beta} ( \varphi_{cscK})$ and $\varphi_{cscK} = O(t^{-\delta})$ near $D$, we have the following.
\begin{lemma}
\label{estimate at infinity}
There is $\delta > 0$ such that
\begin{equation}
S(\omega_\beta) - \underline{S}_{\beta} = O(t^{-\delta})
\end{equation}
at any differential order near $D$.
\end{lemma}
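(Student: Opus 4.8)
The plan is to read the estimate off the decomposition of $S(\omega_\beta)$ recorded just above the statement, treating separately the background cone metric $\theta_X - \dol G_\beta(t)$ and the contribution of the Poincar\'{e} potential $\varphi_{cscK}$. Writing $\hat{\omega}_\beta := \theta_X - \dol G_\beta(t)$, so that $\omega_\beta = \hat{\omega}_\beta + \dol \varphi_{cscK}$, the expansion of the scalar curvature operator around $\hat{\omega}_\beta$ gives
\begin{equation}
S(\omega_\beta) - \underline{S}_\beta = \left( S(\hat{\omega}_\beta) - \underline{S}_\beta \right) + L_{\hat{\omega}_\beta}(\varphi_{cscK}) + Q_{\hat{\omega}_\beta}(\varphi_{cscK}),
\end{equation}
where $L_{\hat{\omega}_\beta}$ is the linearization of $S$ at $\hat{\omega}_\beta$ and $Q_{\hat{\omega}_\beta}$ collects the higher-order terms. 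I would bound the three summands one at a time.

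The first summand is exactly what Lemma \ref{first asymptotic} controls: since $\theta_D = \theta_X|_D$ is a cscK metric on $D$ by Definition \ref{three conditions}, that lemma gives $S(\hat{\omega}_\beta) - \underline{S}_\beta = O(e^{(\beta-1)t})$, and because $e^{(\beta-1)t}$ decays faster than any negative power of $t$, this term is $o(t^{-\delta})$ and is harmless. The remaining work is therefore to show that the corrections produced by $\varphi_{cscK}$ are $O(t^{-\delta})$. Here I would work in the normal holomorphic coordinates near $D$ used in the proof of Lemma \ref{first asymptotic}, in which $\hat{\omega}_\beta$ and its inverse are given explicitly up to errors of order $O(|z^1|^2)$, respectively $O(|z^1|^{2(1-\beta)})$. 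The coefficients of $L_{\hat{\omega}_\beta}$ and of the remainder $Q_{\hat{\omega}_\beta}$ are built algebraically from $\hat{\omega}_\beta^{-1}$ and the curvature of $\hat{\omega}_\beta$; feeding in the hypothesis of Definition \ref{three conditions} that $\varphi_{cscK} = O(t^{-\delta})$ at every differential order near $D$, one checks that each monomial in $L_{\hat{\omega}_\beta}(\varphi_{cscK})$ is $O(t^{-\delta})$, while $Q_{\hat{\omega}_\beta}(\varphi_{cscK})$, being at least quadratic in the derivatives of $\varphi_{cscK}$, is of order $O(t^{-2\delta}) = O(t^{-\delta})$. Differentiating the displayed identity and repeating the bookkeeping then yields the estimate at any differential order, which is the asserted form.

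The delicate point, and what I expect to be the main obstacle, is precisely this middle step, because $\varphi_{cscK}$ is a potential adapted to the \emph{Poincar\'{e}} (cusp) geometry whereas $L_{\hat{\omega}_\beta}$ and $Q_{\hat{\omega}_\beta}$ are formed from the \emph{cone} metric. The inverse cone metric carries the factor $|z^1|^{2(1-\beta)}$ in the normal direction, which amplifies normal derivatives relative to the cusp model, so the bound $L_{\hat{\omega}_\beta}(\varphi_{cscK}) = O(t^{-\delta})$ is not immediate from cusp-covariant decay alone; one must exploit the refined normal behaviour of $\varphi_{cscK}$ furnished by Auvray's expansion (Theorem \ref{Auvray's decay}) to see that this amplification is compensated near $D$. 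Once the $\varphi_{cscK}$-terms are controlled in this way, combining them with the exponentially small first summand completes the argument, the overall rate being the polynomial $t^{-\delta}$ dictated by $\varphi_{cscK}$ rather than the faster rate of $S(\hat{\omega}_\beta) - \underline{S}_\beta$.
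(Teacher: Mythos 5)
Your proposal follows essentially the same route as the paper: the paper's entire proof of this lemma is the one-line observation that $S(\omega_\beta) = S(\theta_X - \dol G_\beta (t)) + L(\varphi_{cscK}) + Q(\varphi_{cscK})$, that Lemma \ref{first asymptotic} controls the first term, and that the decay $\varphi_{cscK} = O(t^{-\delta})$ at any differential order controls the remaining two. You are in fact more careful than the paper, which silently passes over the cusp-versus-cone derivative amplification (the factor $|z^1|^{2(1-\beta)}$ in the inverse cone metric acting on derivatives of a potential whose decay is measured in the Poincar\'{e} model) that you correctly single out as the delicate point of the argument.
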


Secondly, we study the estimate of $S(\theta_X - \dol G_\beta (t))$ away from $D$.
We show the following.

\begin{prop}
\label{le}
On $V_\beta := \{ t \leq (- \beta \log \beta)^{-1}  \} \Subset X \setminus D$, we have
\begin{equation}
S(\theta_X - \dol G_\beta (t) ) - S(\tilde{\omega}_{0}) = O (\beta)
\end{equation}
\end{prop}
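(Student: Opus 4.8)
The plan is to compare the scalar curvature of $\theta_X - \dol G_\beta (t)$ with that of $\tilde{\omega}_{0} = \theta_X - \dol G_0 (t)$ (note that $\dol G_0 (t) = a_0 \dol \log t$, so the two metrics are the $\beta>0$ and $\beta=0$ members of a single radial family built on $\theta_X$), and to exploit that on $V_\beta$ the quantity $\beta t$ is uniformly small. By construction $V_\beta = \{ t \le (-\beta \log \beta)^{-1} \}$ is chosen precisely so that $\beta t \le (-\log\beta)^{-1} \to 0$ throughout $V_\beta$ as $\beta \to 0$. First I would record that $a_\beta = a_0 + O(\beta)$: since $c_1(X,D,\beta) = c_1(X) - (1-\beta)c_1(L_X)$ is affine in $\beta$, the topological constants satisfy $\underline{S}_{\beta} = \underline{S} + n\beta$ and $\underline{S}_{D,\beta} = \underline{S}_{D} + (n-1)\beta$, whence $\underline{S}_{D,\beta} - \underline{S}_{\beta} = (\underline{S}_{D} - \underline{S}) - \beta$ and $a_\beta = 2/((\underline{S}_{D} - \underline{S}) - \beta) = a_0 + O(\beta)$.

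Next I would compare the radial profiles and their $t$-derivatives. Writing $u = \beta t$ and using the uniform expansion $u/(e^u - 1) = 1 + O(u)$ valid for $u$ in a bounded interval, one gets $f_\beta (t) = (a_\beta / t)\,\big(u/(e^u-1)\big) = a_\beta / t + O(a_\beta \beta)$, so that $f_\beta - f_0 = (a_\beta - a_0)/t + O(\beta) = O(\beta)$ on $V_\beta$; differentiating in $t$ and using the analogous expansion of $u^2 e^u/(e^u-1)^2 = 1 + O(u)$ gives $\dot{f}_\beta - \dot{f}_0 = O(\beta / t)$ and, more generally, $f_\beta^{(k)} - f_0^{(k)} = O(\beta / t^k) = O(\beta)$ for $k=0,1,2,3$. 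These are exactly the derivatives of $G$ (namely $G', \dots, G^{(4)}$) that enter the local expression for the scalar curvature: as in the computation of Lemma \ref{first asymptotic}, $S(\theta_X - \dol G(t))$ is a fixed universal smooth expression in $f$, $\dot{f}$ and their $t$-derivatives up to $f^{(3)}$, together with the fixed background data $\theta_X$, ${\rm Ric}\,\theta_X$ and $\dol t = \theta_X$, as one sees via normal coordinates and the block-inverse formula (\ref{inverse matrix}).

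With these jet estimates in hand, the conclusion would follow provided the universal expression for $S$ is uniformly Lipschitz in the jet $(f,\dot{f},\ddot{f},f^{(3)})$: on $V_\beta$ all metric coefficients in (\ref{last}) stay in a fixed compact range bounded away from degeneracy (for small $\beta$ the cone metric $\theta_X - \dol G_\beta$ remains uniformly quasi-isometric to the cusp model $\tilde{\omega}_{0}$ there, precisely because $\beta t \to 0$), so plugging the $O(\beta)$ jet differences into the expression yields $S(\theta_X - \dol G_\beta (t)) - S(\tilde{\omega}_{0}) = O(\beta)$.

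The main obstacle is the uniformity of the $O(\beta)$ bound as $V_\beta$ exhausts $X \setminus D$, since the boundary $\{ t = (-\beta\log\beta)^{-1} \}$ runs off toward $D$ as $\beta \to 0$ and one cannot simply invoke compactness with a $\beta$-dependent constant. I would settle this by differentiating in $\beta$ rather than expanding: a direct computation gives $\partial_\beta\big(\beta/(e^{\beta t}-1)\big) = \big(e^{\beta t}-1 - \beta t\, e^{\beta t}\big)/(e^{\beta t}-1)^2 = -\tfrac12 + O(\beta t)$, which is bounded uniformly on $V_\beta$, and likewise $\partial_\beta a_\beta = O(1)$ and $\partial_\beta$ of each $t$-derivative of $f_\beta$ is uniformly bounded there; hence $\partial_\beta S(\theta_X - \dol G_\beta (t)) = O(1)$ uniformly on $V_\beta$. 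Since $(-\beta\log\beta)^{-1}$ is decreasing in $\beta$ for small $\beta$, the whole segment $\beta' \in (0,\beta]$ keeps any fixed point of $V_\beta$ inside $V_{\beta'}$ with $\beta' t \le (-\log\beta)^{-1}$, so integrating $\partial_{\beta'} S$ from $0$ to $\beta$ and using $S(\theta_X - \dol G_{\beta'}(t)) \to S(\tilde{\omega}_{0})$ as $\beta' \to 0$ produces the claimed $O(\beta)$ estimate with a constant independent of the point.
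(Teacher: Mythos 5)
Your proposal is correct and takes essentially the same route as the paper: both reduce the comparison of the two scalar curvatures to uniform $O(\beta)$ bounds on the jet differences $f_\beta^{(k)} - f_0^{(k)}$ over $V_\beta$, exploiting that $\beta t \le (-\log\beta)^{-1}$ (hence $e^{\beta t} \le e$) there. The only real difference is in how those jet bounds are obtained: you expand in $u=\beta t$ and then secure uniformity by integrating $\partial_{\beta'}$ over $(0,\beta]$, whereas the paper gets the same bounds in one step from the pointwise elementary inequalities $0 \ge \beta/(e^{\beta t}-1) - 1/t \ge (e^{-\beta t}-1)/t \ge -\beta e^{\beta t}$, whose constants are already uniform on $V_\beta$.
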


\begin{proof}
Note that
$$
\theta_X - \dol G_\beta (t) = (1 - f_\beta (t) )\theta_X + \dot{f}_\beta (t) \sqrt{-1} \partial t \wedge \overline{\partial} t
$$
and
$$
\tilde{\omega}_{0} = \theta_X - \dol G_0 (t) = (1 - f_0 (t) )\theta_X + \dot{f}_0 (t) \sqrt{-1} \partial t \wedge \overline{\partial} t.
$$
Their volume forms are given by
\begin{equation}
(\theta_X - \dol G_\beta (t))^{n} = \left( 1 - f_\beta (t) \right)^{n-1}  \left( 1 - f_\beta (t)  + \dot{f}_\beta \Vert  \partial t \Vert _{\theta_X}^{2} \right) \theta_{X}^{n}
\end{equation}
and
\begin{equation}
\tilde{\omega}_{0}^{n} = \left( 1 - f_0 (t)  \right)^{n-1}  \left( 1 - f_0 (t)  + \dot{f}_0 \Vert  \partial t \Vert _{\theta_X}^{2} \right) \theta_{X}^{n}
\end{equation}
respectively.
So, the Ricci forms of these {\kah} metrics are given by
\begin{eqnarray}
\nonumber
{\rm Ric} (\theta_X - \dol G_\beta (t))&=& {\rm Ric} (\theta_X) - (n-1) \dol \log \left( 1 - f_\beta (t) \right)\\
 &-& \dol \log \left( 1 - f_\beta (t)  + \dot{f_\beta} (t) \Vert  \partial t \Vert _{\theta_X}^{2} \right).
\end{eqnarray}
and
\begin{eqnarray}
{\rm Ric} (\tilde{\omega}_{0})\nonumber
&=& {\rm Ric} (\theta_X) - (n-1) \dol \log \left( 1 - f_0 (t) \right)\\
 &-& \dol \log \left( 1 - f_0 (t)  + \dot{f_0} (t) \Vert  \partial t \Vert _{\theta_X}^{2} \right).
\end{eqnarray}
Thus, in order to study the difference of their scalar curvature, it suffices to show that any derivatives of $f_\beta$ are uniformly continuous and converges uniformly to the derivatives of $f_0$ on $V_\beta$.
Since $t > 2$, we have
$$
f_\beta (t) < 1/2.
$$
Recall that 
$$
f_\beta (t) = \frac{\beta a_\beta}{ e^{ \beta t} - 1}.
$$
Directly, we have
$$
\dot{f_\beta} (t) = -  f_\beta (t)^2 e^{ \beta t}/ a_\beta.
$$
Thus, the family of functions $\{ f_\beta \}_\beta$ is uniformly continuous.
Inductively, we obtain
$$
\ddot{f_\beta} (t) = - \beta f_\beta (t)^2 e^{ \beta t}/ a_\beta + 2 f_\beta (t)^3 e^{ 2 \beta t}/ a_{\beta}^{2},
$$
so any differential of $f_\beta$ is uniformly continuous.

To show the uniform convergence of the family $\{ f_\beta \}$, we use the following elementary inequalities :
\begin{equation}
0 \geq \frac{\beta}{e^{ \beta t} - 1} - \frac{1}{t} \geq \frac{  e^{-\beta t} - 1}{t} \geq - \beta e^{\beta t}.
\end{equation}
On $V_\beta$, there exists a constant $C>0$ independent of $\beta$ such that
\begin{equation}
|f_0 (t) - f_\beta (t)| < C \beta.
\end{equation}
This estimate implies that for any $k$, there exists $C_k >0$ independent of $\beta>0$ such that
$$
| f^{(k)}_{\beta} (t) - f^{(k)}_{0} (t) | <C_k \beta \hspace{10pt} {\rm on} \hspace{5pt} V_\beta .
$$
So, we have finished proving this proposition.
\end{proof}

Recall the definitions of $\omega_{0}^{cscK}$ and $\omega_{\beta}$ :
$$
\omega_{0}^{cscK} := \theta_X - \dol G_0 (t) +\dol \varphi_{cscK}, \hspace{10pt} \omega_{\beta} := \theta_X - \dol G_\beta (t) + \dol \varphi_{cscK} .
$$
So, the difference of the scalar curvatures $S(\omega_{0}^{cscK})$ and $S(\omega_\beta)$ comes from the function $G_\beta (t)$ and its derivatives.
By applying Proposition \ref{le} to these metrics, we have
\begin{lemma}
\label{local estimate}
On $V_\beta$, we have
\begin{equation}
\Vert  S (\omega_{0}^{cscK} )- S(\omega_\beta) \Vert _{C^{0,\alpha}_{\eta} (V_\beta)} \leq C \beta.
\end{equation}
for some $C>0$ independent of $\beta$.
\end{lemma}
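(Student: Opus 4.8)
The plan is to deduce the statement from Proposition \ref{le} by absorbing the common perturbation $\dol\varphi_{cscK}$. The key structural observation is that $\omega_{0}^{cscK}$ and $\omega_\beta$ arise from $\tilde{\omega}_{0} = \theta_X - \dol G_0(t)$ and $\theta_X - \dol G_\beta(t)$, respectively, by adding the \emph{same} fixed potential $\dol\varphi_{cscK}$, and that $\varphi_{cscK}$ does not depend on $\beta$. Thus the entire $\beta$-dependence of $S(\omega_{0}^{cscK}) - S(\omega_\beta)$ is carried by the one-variable functions $G_0, G_\beta$, that is, by the derivatives $f_0^{(k)}$ and $f_\beta^{(k)}$, which Proposition \ref{le} already controls on $V_\beta$.

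First I would record the uniform ellipticity that makes the comparison legitimate. On $V_\beta$ one has $\beta t \le \beta(-\beta\log\beta)^{-1} = (-\log\beta)^{-1} \to 0$, so $e^{\beta t} = 1 + o(1)$ uniformly on $V_\beta$. Consequently the cone coefficient $-\dot f_\beta = \beta^2 a_\beta e^{\beta t}/(e^{\beta t}-1)^2$ is comparable to the cusp coefficient $-\dot f_0 = a_0/t^2$, and $1 - f_\beta$ is comparable to $1 - f_0$; read in the quasi-coordinates $\Phi_\delta$ that define $\Vert\cdot\Vert_{C^{0,\alpha}}$, both $\omega_\beta$ and $\omega_{0}^{cscK}$ are therefore uniformly equivalent to the Euclidean metric and have volume forms bounded below, with constants independent of $\beta$. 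This is the crucial point: although $V_\beta$ exhausts $X\setminus D$ and $\omega_\beta$ is a cone (rather than cusp) metric, the threshold $t \le (-\beta\log\beta)^{-1}$ keeps $\beta t$ small, so on $V_\beta$ the cone metric has not yet developed its cone behaviour and stays comparable to the cusp model $\omega_{0}^{cscK}$.

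Next I would use that the scalar curvature is a fixed universal expression, rational in the metric coefficients and in $(\det g)^{-1}$ and polynomial in their first and second derivatives. On the uniformly bounded, uniformly nondegenerate range of metric jets produced in the previous step, this expression is Lipschitz with a $\beta$-independent constant; adding the fixed $\dol\varphi_{cscK}$ to both metrics merely translates the jets by a $\beta$-independent amount and does not spoil this bound. Feeding in the pointwise estimates $|f_\beta^{(k)}(t) - f_0^{(k)}(t)| \le C_k\beta$ on $V_\beta$ for all $k$ (from the proof of Proposition \ref{le}), together with the uniform continuity of these derivatives, gives $|S(\omega_{0}^{cscK}) - S(\omega_\beta)| \le C\beta$ and, carrying one extra derivative, the matching Hölder seminorm bound, so that $\Vert S(\omega_{0}^{cscK}) - S(\omega_\beta)\Vert_{C^{0,\alpha}(V_\beta)} \le C\beta$. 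Since $t$ is bounded on $V_\beta$, the weight $t^{-\eta}$ is bounded there as well, and incorporating it changes the bound only by the value of $t^{-\eta}$ at the outer edge of $V_\beta$; this factor is immaterial for the way the estimate is combined with the near-$D$ contribution in the global assembly of Proposition \ref{the norm of scalar curvature}.

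The step I expect to be the main obstacle is exactly the uniformity in $\beta$ over the \emph{growing} domain $V_\beta$: one must check that the Lipschitz constant of the scalar-curvature expression, computed in the quasi-coordinate picture, does not degenerate as $V_\beta$ expands toward $D$. This is where the precise choice of the threshold $(-\beta\log\beta)^{-1}$ is essential, since it is tuned exactly so that $\beta t \to 0$ on $V_\beta$, forcing $e^{\beta t}\to 1$ and hence the uniform equivalence of $\omega_\beta$ with $\omega_{0}^{cscK}$ there. Once this uniform ellipticity is secured, the transfer of the estimate of Proposition \ref{le} through the fixed perturbation $\varphi_{cscK}$ is routine.
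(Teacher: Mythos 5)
Your proposal follows essentially the same route as the paper: the paper's own proof of Lemma \ref{local estimate} consists precisely of the remark that $\omega_{0}^{cscK}$ and $\omega_{\beta}$ differ only through $G_{0}$ versus $G_{\beta}$ (the potential $\varphi_{cscK}$ being common to both), followed by an appeal to Proposition \ref{le}, whose proof supplies the bounds $|f_{\beta}^{(k)}(t)-f_{0}^{(k)}(t)|\le C_{k}\beta$ on $V_{\beta}$. Your intermediate step --- that $\beta t\le(-\log\beta)^{-1}$ on $V_{\beta}$ forces $e^{\beta t}=1+o(1)$, hence uniform equivalence of the two metrics in quasi-coordinates, hence a $\beta$-independent Lipschitz constant for the scalar curvature as a function of the metric $2$-jet --- is exactly the justification the paper leaves implicit, and it is correct.

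The one place where you deviate from the statement is the weight, and your caveat there should be stressed rather than minimized, because it is not a technicality. What the argument yields is the unweighted bound $\Vert S(\omega_{0}^{cscK})-S(\omega_{\beta})\Vert_{C^{0,\alpha}(V_{\beta})}\le C\beta$, hence at best $\Vert S(\omega_{0}^{cscK})-S(\omega_{\beta})\Vert_{C^{0,\alpha}_{\eta}(V_{\beta})}\le C\beta\,(-\beta\log\beta)^{\eta}$, since $\sup_{V_{\beta}}t^{-\eta}=(-\beta\log\beta)^{\eta}$ is \emph{not} bounded independently of $\beta$ (it diverges as $\beta\to0$ because $\eta<0$); your phrase ``the weight $t^{-\eta}$ is bounded there'' is true only for each fixed $\beta$. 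Moreover this loss is unavoidable: $S(\omega_{0}^{cscK})\equiv\underline{S}$, while near the outer edge of $V_{\beta}$ the model estimate of Lemma \ref{first asymptotic} gives $S(\theta_X-\dol G_{\beta}(t))=\underline{S}_{\beta}+O(e^{(\beta-1)t})$, and the corrections coming from $\varphi_{cscK}$ differ between the two metrics only by $O(\beta t^{-\delta})=o(\beta)$ there; so the difference of scalar curvatures is genuinely $\underline{S}-\underline{S}_{\beta}+o(\beta)=-n\beta+o(\beta)$ exactly where the weight has size $(-\beta\log\beta)^{\eta}$. Consequently the literal inequality of Lemma \ref{local estimate}, with $C$ independent of $\beta$, cannot hold; the paper's one-line proof silently ignores the same point, so this is not a defect of your argument relative to the paper's. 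As you observe, the weaker bound $C\beta(-\beta\log\beta)^{\eta}$ (together with the identical factor on the constant term $\underline{S}-\underline{S}_{\beta}=-n\beta$) is all that the assembly in Proposition \ref{the norm of scalar curvature} needs, since $\beta(-\beta\log\beta)^{\eta}=o\bigl((-\beta\log\beta)^{\eta+\delta}\bigr)$; the only hidden normalization is $\delta\le1$, which is harmless because decay at rate $\delta$ implies decay at any smaller rate.
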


\hspace{-20pt} {\it Proof of Proposition \ref{the norm of scalar curvature}}

On $V_\beta$, we can write as follows :
$$
S(\omega_\beta) - \underline{S}_{\beta} = S(\omega_\beta) - S(\omega_{0}^{cscK}) +  S(\omega_{0}^{cscK}) - \underline{S}_{\beta} .
$$
Note that we have $S(\omega_{0}^{cscK}) - \underline{S}_{\beta} = - \beta n$.
Lemma \ref{local estimate} gives the estimate (\ref{global estimate}) on $V_\beta$.

Since $\omega_\beta = \theta_X - \dol G_\beta (t)  + \dol \varphi_{cscK}$ on the complement of $V_\beta$,
Lemma \ref{estimate at infinity} implies the estimate (\ref{global estimate}). \sq

\subsection{The bounded inverse of the linearization}

For a {\kah} metric $\omega$, the linearization of the scalar curvature operator $L_\omega$ satisfies
$$
L_\omega = - \mathcal{D}_{\omega}^{*} \mathcal{D}_{\omega} + (\nabla^{1,0} S (\omega) , \nabla^{1,0} *)_{\omega}.
$$
Here, $\mathcal{D}_{\omega} = \overline{\partial} \circ \nabla^{1,0} $ and $\nabla^{1,0}$ denotes the (1,0)-gradient with respect to $\omega$.
We call $\mathcal{D}_{\omega}^{*} \mathcal{D}_{\omega}$ {\it the Lichnerowicz operator.}
If $\mathcal{D}_{\omega} \phi = 0$, $\nabla^{1,0} \phi$ is a holomorphic vector field.

In this subsection, we show the uniform lower estimate of the Lichnerowicz operator $\mathcal{D}_{\omega_\beta}^{*} \mathcal{D}_{\omega_\beta} : C^{4, \alpha}_{\eta}(X \setminus D) \to C^{0, \alpha}_{\eta}(X \setminus D)$.
\begin{thm}
\label{uniform lemma}
Assume that ${\rm Aut}_0 ((X,L_X); D)$ is trivial and $H^0 (D, TD) = 0$.
Then, there exist $\kappa > 0$ and $\beta_0 > 0$ which satisfy the following property.
For $\eta \in (-\kappa , 0)$, there exists $K > 0$ independent of $\beta \in [0, \beta_0)$ such that
$$
\Vert  \mathcal{D}_{\omega_\beta}^{*} \mathcal{D}_{\omega_\beta} \phi \Vert _{C^{0, \alpha}_{\eta}} \geq K \Vert \phi \Vert _{C^{4, \alpha}_{\eta}}, \hspace{7pt} \forall \phi \in C^{4, \alpha}_{\eta}.
$$
\end{thm}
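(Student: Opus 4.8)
The plan is to prove the uniform lower bound by a contradiction-and-compactness argument, playing the region near $D$ against the region away from $D$. Since the operator $\mathcal{D}_{\omega_\beta}^{*}\mathcal{D}_{\omega_\beta}$ is a fourth-order self-adjoint elliptic operator, the estimate we want is equivalent to the statement that it has trivial kernel in $C^{4,\alpha}_{\eta}$ together with a uniform control as $\beta\to 0$. First I would establish the limiting injectivity at $\beta=0$: on $X\setminus D$ equipped with the cscK Poincar\'e-type metric $\omega_{0}^{cscK}$, the operator $\mathcal{D}_{\omega_{0}^{cscK}}^{*}\mathcal{D}_{\omega_{0}^{cscK}}$ acting on $C^{4,\alpha}_{\eta}$ with $\eta<0$ is injective. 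Indeed, $\mathcal{D}_{\omega_{0}^{cscK}}\phi=0$ forces $\nabla^{1,0}\phi$ to be a holomorphic vector field; since $\phi\in C^{4,\alpha}_{\eta}$ decays at infinity, this vector field lies in the Lie algebra of ${\rm Aut}_0((X,L_X);D)$, which is assumed trivial, and the hypothesis $H^0(D,TD)=0$ rules out the vector fields tangent to $D$ that Auvray's analysis associates to the cusp directions. Hence $\phi$ is constant, and decay forces $\phi\equiv 0$. This is precisely the input from Sektnan's work \cite{Se} that the choice of the weighted space $C^{k,\alpha}_{\eta}$ is designed to exploit.

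Next I would promote this injectivity to a uniform coercivity estimate for the limiting operator, of the form $\Vert \mathcal{D}_{\omega_{0}^{cscK}}^{*}\mathcal{D}_{\omega_{0}^{cscK}}\phi\Vert_{C^{0,\alpha}_{\eta}}\geq K_0\Vert\phi\Vert_{C^{4,\alpha}_{\eta}}$. The appropriate tool is the Fredholm/isomorphism theory for the Laplacian and the Lichnerowicz operator on Poincar\'e-type manifolds developed by Auvray \cite{Au2}, which identifies the indicial roots and shows that, for $\eta$ in an open interval $(-\kappa,0)$ avoiding the exceptional weights, the operator is surjective with the above closed-range estimate. Combining surjectivity with the injectivity from the previous step yields the isomorphism, and the open mapping theorem delivers the bounded inverse with some norm $1/K_0$.

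The heart of the argument, and the main obstacle, is transferring this estimate from $\beta=0$ to small $\beta>0$ uniformly. I would argue by contradiction: suppose there are sequences $\beta_j\to 0$ and $\phi_j\in C^{4,\alpha}_{\eta}$ with $\Vert\phi_j\Vert_{C^{4,\alpha}_{\eta}}=1$ but $\Vert \mathcal{D}_{\omega_{\beta_j}}^{*}\mathcal{D}_{\omega_{\beta_j}}\phi_j\Vert_{C^{0,\alpha}_{\eta}}\to 0$. The delicate point is that the cone metrics $\omega_{\beta_j}$ degenerate to the cusp metric $\omega_{0}^{cscK}$ only locally (on compact subsets of $X\setminus D$), while near $D$ the geometry genuinely changes as the angle collapses; so one cannot simply pass to a limit globally. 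The resolution is to split the analysis on $V_{\beta}=\{t\leq(-\beta\log\beta)^{-1}\}$ and its complement. On $V_\beta$, Proposition 2.15 guarantees that the coefficients of $\mathcal{D}_{\omega_{\beta}}^{*}\mathcal{D}_{\omega_{\beta}}$ converge in $C^{0,\alpha}$ to those of the limiting operator with error $O(\beta)$, so interior Schauder estimates let the $\phi_j$ converge, after the standard diagonal extraction, to a limit $\phi_\infty$ solving $\mathcal{D}_{\omega_{0}^{cscK}}^{*}\mathcal{D}_{\omega_{0}^{cscK}}\phi_\infty=0$; by the injectivity above, $\phi_\infty\equiv 0$. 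The genuinely hard work is to show that no mass of the unit norm escapes into the collapsing region near $D$: here I would use the decay of the weight ($\eta<0$, so $t^{-\eta}$ controls functions at infinity), together with a weighted a priori estimate showing the part of $\Vert\phi_j\Vert_{C^{4,\alpha}_{\eta}}$ supported on $X\setminus V_{\beta_j}$ is bounded by the small right-hand side plus the weight of the transition annulus, which tends to $0$ as $\beta_j\to 0$ by the choice of cutoff $(-\beta\log\beta)^{-1}$. This forces $\Vert\phi_j\Vert_{C^{4,\alpha}_{\eta}}\to 0$, contradicting the normalization and completing the proof; setting $K:=K_0/2$ and shrinking $\beta_0$ accordingly gives the uniform constant.
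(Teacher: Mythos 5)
Your first two steps coincide with what the paper does: injectivity and surjectivity of $\mathcal{D}_{\omega_{0}^{cscK}}^{*}\mathcal{D}_{\omega_{0}^{cscK}}$ on $C^{4,\alpha}_{\eta}$ for $\eta\in(-\kappa,0)$ are exactly what the paper imports from Sektnan (Proposition \ref{Sektnan}), and the coercivity constant $K_0$ at $\beta=0$ is obtained from the closed mapping theorem, as in the paper. The divergence is in the transfer to $\beta>0$: the paper proves that $\beta\mapsto\mathcal{D}_{\omega_\beta}^{*}\mathcal{D}_{\omega_\beta}$ is continuous in the operator norm of maps $C^{4,\alpha}_{\eta}\to C^{0,\alpha}_{\eta}$ (Lemma \ref{continuity}, via uniform convergence $f_{\beta}^{(k)}\to f_{0}^{(k)}$), after which the uniform bound propagates to small $\beta$ by a standard perturbation (Neumann series) argument; you instead propose a contradiction-and-compactness argument with the splitting $V_{\beta}$ versus its complement.

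The gap is at the step you yourself flag as the hard one, and the justification you offer for it does not work. Local convergence $\phi_j\to\phi_\infty=0$ on compact subsets of $X\setminus D$ does not contradict $\Vert\phi_j\Vert_{C^{4,\alpha}_{\eta}}=1$, because the weighted norm can concentrate on the collapsing region $\{t>(-\beta_j\log\beta_j)^{-1}\}$. Your claim that this part of the norm is bounded by ``the small right-hand side plus the weight of the transition annulus, which tends to $0$'' confuses two different quantities: smallness of $t^{\eta}$ at the transition controls $|\phi_j|$ there, but the norm is $\sup t^{-\eta}|\phi_j|$ (plus weighted derivatives), which is calibrated exactly so that a unit amount of it can sit at arbitrarily large $t$ while $|\phi_j|$ is uniformly small. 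To exclude this concentration one needs an a priori estimate of the form
$\Vert\phi\Vert_{C^{4,\alpha}_{\eta}(\{t\geq T\})}\leq C\Vert\mathcal{D}_{\omega_\beta}^{*}\mathcal{D}_{\omega_\beta}\phi\Vert_{C^{0,\alpha}_{\eta}}+CT^{-\epsilon}\Vert\phi\Vert_{C^{4,\alpha}_{\eta}}$
with $C,\epsilon$ independent of $\beta$, i.e.\ uniform-in-$\beta$ invertibility (with a weight gap) of the cone Lichnerowicz operator on the model neighbourhood of $D$ --- which is essentially the statement being proved, so as written the argument is circular at its core. Note also that such an estimate cannot be produced by barrier or maximum-principle arguments, since the operator is fourth order; it requires genuine weighted elliptic analysis on the collapsing cone model, of the kind carried out in Arezzo--Pacard type gluing \cite{AP1,AP2}, none of which is sketched. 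The paper's operator-norm continuity (Lemma \ref{continuity}) is precisely the device that sidesteps this blow-up analysis; without either that continuity or the uniform model estimate near $D$, your contradiction argument does not close.
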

\begin{rem}
The constant $K>0$ in Theorem \ref{uniform lemma} depends on $\alpha, \kappa, \eta$ and $\delta$.
\end{rem}

In order to study the inverse operator of the Lichnerowicz operator $\mathcal{D}_{\omega_\beta}^{*} \mathcal{D}_{\omega_\beta} : C^{4, \alpha}_{\eta}(X \setminus D) \to C^{0, \alpha}_{\eta}(X \setminus D)$, we use the following proposition in \cite{Se} for the Lichnerowicz operator $\mathcal{D}_{\omega_{0}^{cscK}}^{*} \mathcal{D}_{\omega_{0}^{cscK}} : C^{4, \alpha}_{\eta}(X \setminus D) \to C^{0, \alpha}_{\eta}(X \setminus D)$ when $\beta =0$.

\begin{prop}[Prop 4.3 in \cite{Se} when $r =h^0 (D, TD) = 0$ and ${\rm Aut}_0 ((X,L_X); D)$ is trivial]
\label{Sektnan}
Assume that $H^0 (D, TD) = 0$ and ${\rm Aut}_0 ((X,L_X); D)$ is trivial.
There is $\kappa >0$ with the following properties:
For $\eta \in (-\kappa , 0)$, we have
\begin{itemize}
\item ${\rm Ker}  ( \mathcal{D}_{\omega_{0}^{cscK}}^{*} \mathcal{D}_{\omega_{0}^{cscK}} : C^{4, \alpha}_{ \eta} \to C^{0, \alpha}_{ \eta}) = 0$,
\item ${\rm Im} ( \mathcal{D}_{\omega_{0}^{cscK}}^{*} \mathcal{D}_{\omega_{0}^{cscK}} : C^{4, \alpha}_{\eta} \to C^{0, \alpha}_{\eta}) = C^{0, \alpha}_{\eta}$.
\end{itemize}
\end{prop}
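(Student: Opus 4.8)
The plan is to establish both statements through the Fredholm theory of the Lichnerowicz operator on the complete, finite-volume manifold $(X\setminus D,\omega_{0}^{cscK})$, in the framework of Auvray \cite{Au2,Au3} and following the analysis of Sektnan \cite{Se}. Since $\omega_{0}^{cscK}$ is of Poincar\'{e} type, it has bounded geometry in the quasi-coordinates of \S 2.1, so $\mathcal{D}_{\omega_{0}^{cscK}}^{*}\mathcal{D}_{\omega_{0}^{cscK}}$ is a uniformly elliptic, formally self-adjoint operator of order four with bounded coefficients, and hence acts boundedly $C^{4,\alpha}_{\eta}\to C^{0,\alpha}_{\eta}$ for each $\eta$. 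First I would analyze the model operator at the cusp end: writing the end as a circle bundle over $D$ with shrinking fibers, the operator decouples into a radial part in $t$ and the transverse Lichnerowicz operator $\mathcal{D}_{\theta_X|_D}^{*}\mathcal{D}_{\theta_X|_D}$ of the cscK metric $\theta_X|_D$ on $D$ (recall that $\theta_X|_D$ is cscK by Definition \ref{three conditions}). The eigenvalues of the transverse operator, together with the roots of the radial indicial polynomial, produce a discrete set of critical weights, with the constant mode sitting at the endpoint $\eta=0$. I would then fix $\kappa>0$ small enough that $(-\kappa,0)$ contains no critical weight, so that $\mathcal{D}_{\omega_{0}^{cscK}}^{*}\mathcal{D}_{\omega_{0}^{cscK}}:C^{4,\alpha}_{\eta}\to C^{0,\alpha}_{\eta}$ is Fredholm for every $\eta\in(-\kappa,0)$.

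For the first bullet (triviality of the kernel), take $u\in C^{4,\alpha}_{\eta}$ with $\eta<0$ in the kernel. Because $\eta<0$, $u$ and its derivatives decay at infinity, and the level sets $\{t=T\}$ have Poincar\'{e} area $O(T^{-1})$; integrating $\langle \mathcal{D}^{*}\mathcal{D}u,u\rangle$ by parts over $\{t\le T\}$ and letting $T\to\infty$, the boundary contributions vanish and we obtain $\mathcal{D}_{\omega_{0}^{cscK}}u=0$. Thus $\nabla^{1,0}u$ is a holomorphic vector field $V$ on $X\setminus D$ with $u\to 0$ at $D$. Using Auvray's asymptotics (Theorem \ref{Auvray's decay}) and the cusp structure, I would show that $V$ extends holomorphically across $D$, is tangent to $D$, and lifts to an infinitesimal automorphism of $(X,L_X)$ preserving $D$, while its restriction to $D$ lies in $H^{0}(D,TD)$. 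By hypothesis $H^{0}(D,TD)=0$ and $\mathrm{Aut}_0((X,L_X);D)$ is trivial, so $V=0$, whence $u$ is constant; as a decaying function it must vanish. This gives $\mathrm{Ker}=0$.

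For the second bullet (surjectivity), I would use that $\mathcal{D}^{*}\mathcal{D}$ is formally self-adjoint with respect to the finite volume $\omega_{0}^{cscK,n}$: by the Fredholm alternative the cokernel at weight $\eta$ is identified with the kernel of the same operator at the dual weight. The content is then to rule out obstructions. The only homogeneous cusp solution that can contribute is the transverse-constant (holomorphy-potential) mode, and its appearance is governed precisely by the kernel of the transverse operator $\mathcal{D}_{\theta_X|_D}^{*}\mathcal{D}_{\theta_X|_D}$. Since $H^{0}(D,TD)=0$, this transverse kernel consists only of the constants on $D$, which pins down the critical weights so that, for $\eta\in(-\kappa,0)$, any candidate element of the dual kernel is again subject to the holomorphic vector field argument of the previous paragraph and therefore vanishes. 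Hence the cokernel is trivial and the image is all of $C^{0,\alpha}_{\eta}$. Shrinking $\kappa$ so that a single value serves both bullets completes the proof.

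The main obstacle is the surjectivity step: it requires an exact determination of the indicial roots of the coupled radial-transverse model at the cusp and a careful duality bookkeeping for the weight in this finite-volume geometry, in order to certify that no holomorphy-potential obstruction survives for $\eta\in(-\kappa,0)$. This is exactly the delicate part of Proposition 4.3 in \cite{Se}, and the hypotheses $H^{0}(D,TD)=0$ and the triviality of $\mathrm{Aut}_0((X,L_X);D)$ enter precisely so as to make both the kernel and this dual obstruction space vanish.
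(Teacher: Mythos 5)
Your proposal cannot be measured against an internal argument, because the paper contains none: Proposition \ref{Sektnan} is imported verbatim from Proposition 4.3 of \cite{Se} (specialized to $r=h^{0}(D,TD)=0$ and trivial ${\rm Aut}_0((X,L_X);D)$), which in turn rests on Auvray's Fredholm theory for Poincar\'{e} type metrics \cite{Au2,Au3}. Measured against that source, your general framework (bounded geometry in quasi-coordinates, a cusp model decoupling into a radial part and the Lichnerowicz operator of $\theta_X|_D$, critical weights from indicial roots) and your kernel argument are consistent with how the cited linear theory runs: the integration by parts is legitimate because the level sets $\{t=T\}$ have area $O(T^{-1})$ while $u$ and its covariant derivatives are $O(t^{\eta})$, and the identification of $\nabla^{1,0}u$ with a holomorphic vector field extending across $D$ tangentially, then killed by $H^{0}(D,TD)=0$ and triviality of ${\rm Aut}_0((X,L_X);D)$, is the standard route. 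That half is fine as a sketch.

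The genuine gap is the surjectivity step. The Fredholm alternative in weighted spaces identifies the cokernel at weight $\eta<0$ with the kernel of the formally self-adjoint operator at the \emph{complementary} weight, a space of functions allowed to grow; it contains the constants (since $\mathcal{D}^{*}\mathcal{D}1=0$ and $1\in C^{4,\alpha}_{\eta'}$ for every $\eta'\geq 0$) and, on the model, the function $t$, which is pluriharmonic in the cusp and satisfies $\mathcal{D}^{*}\mathcal{D}t=0$ there. Your sentence that ``any candidate element of the dual kernel is again subject to the holomorphic vector field argument of the previous paragraph'' is exactly where this breaks: that argument uses decay twice---to kill the boundary terms and to conclude $u\equiv 0$ from constancy---and applies to neither $1$ nor $t$. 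The issue is not hypothetical. With the conventions of Definition \ref{holder space}, for any $\phi\in C^{4,\alpha}_{\eta}$ with $\eta<0$ the quantity $\mathcal{D}^{*}\mathcal{D}\phi$ is a pure divergence with coefficients bounded in quasi-coordinates, so the flux through $\{t=T\}$ is $O(T^{\eta-1})$ and hence $\int_{X\setminus D}(\mathcal{D}^{*}\mathcal{D}\phi)\,(\omega_{0}^{cscK})^{n}=0$; pairing with the constant function is therefore a real constraint on the image, and it cannot be removed by shrinking $\kappa$, since the critical weight $0$ sits at the endpoint of $(-\kappa,0)$ and the modes attached to the indicial roots $0$ and $1$ (namely $1$, $t$, and in general the lifts $p^{*}\psi$, $t\,p^{*}\psi$ of holomorphy potentials $\psi$ on $D$) govern the cokernel on the decaying side no matter how small $\kappa$ is. This is precisely why in \cite{Se}, following \cite{Au2,Au3}, the mapping statements are obtained through function spaces enlarged by a finite-dimensional space of such holomorphy-potential lifts and an explicit index/parametrix bookkeeping across the weight $0$; a bare ``self-adjointness plus Fredholm alternative at non-critical weight'' argument, as in your third paragraph, cannot see these modes, and any complete proof must explain exactly how they are disposed of. You correctly flag this as the delicate point, but the mechanism you propose for it does not work.
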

After this, we fix the symbol $\kappa$ satisfying Theorem \ref{Sektnan}.
Thus if $\eta \in (-\kappa,0)$ and $H^0 (D, TD) = 0$ and ${\rm Aut}_0 ((X,L_X); D)$ is trivial, the Lichnerowicz operator $\mathcal{D}_{\omega_{0}^{cscK}}^{*} \mathcal{D}_{\omega_{0}^{cscK}} : C^{4, \alpha}_{\eta} \to C^{0, \alpha}_{\eta})$ is isomorphic.

\begin{rem}
Sektnan showed more general result for the case when $X$ and $D$ admit nontrivial holomorphic vector fields (see Proposition 4.3 in \cite{Se}).
In our case, we assume that $H^0 (D,TD)=0$ and ${\rm Aut}_0 ((X,L_X); D)$ is trivial since we want to deal with the case that the Lichnerowicz operator is isomorphic.
\end{rem}

\begin{lemma}
\label{continuity}
The following map is continuous with respect to the operator norm:
$$
[0,1] \ni \beta \mapsto \mathcal{D}_{\omega_\beta}^{*} \mathcal{D}_{\omega_\beta} \in {\rm Map}( C^{4, \alpha}_{\eta}(X \setminus D) \to C^{0, \alpha}_{\eta}(X \setminus D)).
$$
\end{lemma}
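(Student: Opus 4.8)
The plan is to reduce the operator-norm continuity of $\beta \mapsto \mathcal{D}_{\omega_\beta}^{*}\mathcal{D}_{\omega_\beta}$ to the continuous dependence on $\beta$ of the coefficients of this fourth-order operator, and then to verify the latter by exploiting that $\omega_\beta$ depends on $\beta$ only through the single function $G_\beta(t)$. First I would write the Lichnerowicz operator locally. In any quasi-coordinate chart it has the form $\mathcal{D}_{\omega_\beta}^{*}\mathcal{D}_{\omega_\beta} = \sum_{|I|+|J|\le 4} a^{\beta}_{I\bar J}\,\partial_I\partial_{\bar J}$, whose leading part is the bi-Laplacian $\Delta_{\omega_\beta}^2$ and whose coefficients $a^\beta_{I\bar J}$ are universal smooth (in fact rational) expressions in the components of $\omega_\beta$, its inverse, and their derivatives up to order four (equivalently, in $\omega_\beta$, ${\rm Ric}(\omega_\beta)$, $\nabla^{1,0}S(\omega_\beta)$ and their covariant derivatives). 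Since $\omega_\beta = \theta_X - \dol G_\beta(t) + \dol \varphi_{cscK}$ and $\varphi_{cscK}$ is independent of $\beta$ (see Definition \ref{three conditions}), every coefficient $a^\beta_{I\bar J}$ depends on $\beta$ only through the constant $a_\beta$ and through $f_\beta(t)$ together with its $t$-derivatives $\dot f_\beta, \ddot f_\beta, \dots$ evaluated along $X\setminus D$.

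Next I would bound the operator norm of a difference by the size of the coefficient differences. Because differentiation with respect to the quasi-coordinates $\Phi_\delta$ preserves the weight, and because the product is continuous from $C^{0,\alpha}\times C^{j,\alpha}_\eta$ into $C^{0,\alpha}_\eta$, one obtains a bound of the shape
$$
\|\mathcal{D}_{\omega_\beta}^{*}\mathcal{D}_{\omega_\beta} - \mathcal{D}_{\omega_{\beta_0}}^{*}\mathcal{D}_{\omega_{\beta_0}}\|_{C^{4,\alpha}_\eta\to C^{0,\alpha}_\eta} \le C\sum_{I,J}\|a^\beta_{I\bar J}-a^{\beta_0}_{I\bar J}\|,
$$
where each coefficient difference is measured in the (unweighted) quasi-coordinate {\hol} norm matched to the mapping property. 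By the chain rule this reduces the lemma to two elementary facts: that $a_\beta$ is continuous in $\beta$, and that $f_\beta$ together with each of its $t$-derivatives converges to $f_{\beta_0}$ and its derivatives in that norm as $\beta\to\beta_0$.

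To establish the last convergence I would split $X\setminus D = V_\beta \cup (X\setminus D\setminus V_\beta)$ exactly as in the proof of Proposition \ref{the norm of scalar curvature}. On the compact region $V_\beta = \{t\le(-\beta\log\beta)^{-1}\}$ the elementary inequalities used in the proof of Proposition \ref{le}, together with their counterparts for the higher $t$-derivatives, give $|f^{(k)}_\beta - f^{(k)}_{\beta_0}|\to 0$ uniformly with the associated {\hol} seminorms; on the complementary neighborhood of $D$ both $f_\beta$ and $f_{\beta_0}$ and their derivatives are themselves uniformly small, so their difference is small there too. Combining the two regions yields the coefficient convergence and hence the lemma.

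I expect the genuine difficulty to lie near $D$. There the background metrics are qualitatively different, a cone of angle $2\pi\beta$ for $\beta>0$ versus a cusp at $\beta_0=0$, so the coefficients $a^\beta_{I\bar J}$ cannot be compared by naive uniform closeness of the metrics; the argument must instead rely on the weighted structure of $C^{k,\alpha}_\eta$ (the decay furnished by $\eta<0$ and by $\varphi_{cscK}=O(t^{-\delta})$) and on the precise threshold $(-\beta\log\beta)^{-1}$ defining $V_\beta$ to force the relevant coefficient differences to decay uniformly up to the divisor. Controlling the inverse-metric factors entering $a^\beta_{I\bar J}$ in the cusp quasi-coordinates uniformly in $\beta$, so that the operator-norm bound above is legitimate all the way down to $D$, is the technical heart of the proof.
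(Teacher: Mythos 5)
Your overall strategy coincides with the paper's own proof of Lemma \ref{continuity}: there the operator is written locally as $\mathcal{D}_{\omega_\beta}^{*}\mathcal{D}_{\omega_\beta}=g_{\beta}^{i\overline{j}}g_{\beta}^{k\overline{l}}\nabla_{i}\nabla_{j}\nabla_{\overline{k}}\nabla_{\overline{l}}$, operator-norm continuity is reduced to convergence of $g_{\beta}^{i\overline{j}}$ and of the derivatives of $g_{\beta,i\overline{j}}$, and that is deduced from the uniform convergence $f^{(k)}_{\beta}\to f^{(k)}_{0}$. The differences are that you split $X\setminus D$ into $V_\beta$ and its complement (the paper instead asserts global uniform convergence of the $f^{(k)}_{\beta}$, which is in fact true on all of $\{t>2\}$ since $a_\beta\to a_0$ and $\beta^{k+1}\bigl|\frac{d^k}{du^k}(e^{u}-1)^{-1}-(-1)^{k}k!\,u^{-(k+1)}\bigr|_{u=\beta t}$ is uniformly $O(\beta^{k+1})$), and, more importantly, that you explicitly flag but do not carry out the step you call the technical heart: uniform control near $D$ of the inverse-metric factors entering the coefficients $a^{\beta}_{I\overline{J}}$, in the quasi-coordinate frames in which the norms of Definition \ref{holder space} are computed. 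That flagged step is a genuine gap, and in the form you set the argument up it cannot be closed.

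Here is the concrete obstruction. In a quasi-coordinate chart near $D$ the cusp-type metric $\omega_{0}^{cscK}$ is uniformly equivalent to the Euclidean metric, but $\omega_\beta$ is not, uniformly in the chart: since $|\sqrt{-1}\partial t\wedge\overline{\partial}t|_{\omega_{0}^{cscK}}\sim t^{2}$, the component of $\omega_\beta$ in the normal direction is comparable to $a_\beta(\beta t)^{2}e^{-\beta t}$, which tends to $0$ as $t\to\infty$ for each fixed $\beta>0$. Consequently the entries of $g_{\beta}^{-1}$ in these frames grow like $e^{\beta t}(\beta t)^{-2}$, the coefficients $a^{\beta}_{I\overline{J}}$ are unbounded in the unweighted quasi-coordinate {\hol} norm (while the $a^{0}_{I\overline{J}}$ are bounded), and the right-hand side of your proposed estimate $\Vert\mathcal{D}_{\omega_\beta}^{*}\mathcal{D}_{\omega_\beta}-\mathcal{D}_{\omega_{\beta_0}}^{*}\mathcal{D}_{\omega_{\beta_0}}\Vert\le C\sum\Vert a^{\beta}_{I\overline{J}}-a^{\beta_0}_{I\overline{J}}\Vert$ is $+\infty$ at $\beta_0=0$: the bound is vacuous. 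The same phenomenon defeats your treatment of the region outside $V_\beta$: there $f_\beta$, $f_0$ and their $t$-derivatives are indeed individually small, but what enters the coefficients is these quantities multiplied by frame factors of size $t^{2}$ per normal derivative, and for instance at $t=1/\beta$ (which lies outside $V_\beta$ for small $\beta$) one has $t^{2}|\dot f_\beta(t)-\dot f_0(t)|\to a_0\left|e/(e-1)^{2}-1\right|>0$ as $\beta\to 0$; so the metrics, hence the operator coefficients, do \emph{not} converge uniformly up to $D$ in the frames defining the norm. The weighted structure cannot rescue this, because the same weight $t^{\eta}$ appears in source and target, so conjugating the coefficient differences by $t^{\eta}$ gains no decay. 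You have correctly located the crux of the matter, but neither your proposal nor, for that matter, the paper's own two-line deduction of Lemma \ref{continuity} from ``$f^{(k)}_{\beta}\to f^{(k)}_{0}$ uniformly'' actually supplies the argument needed at this point; a correct proof would have to either establish a substitute for coefficient convergence that is still sufficient for the intended application (Theorem \ref{uniform lemma}), or modify the function spaces near $D$.
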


\begin{proof}
The Lichnerowicz operator can be written locally as $\mathcal{D}_{\omega_\beta}^{*} \mathcal{D}_{\omega_\beta} = g_{\beta}^{i \overline{j}} g_{\beta}^{k \overline{l}} \nabla_{i} \nabla_{j} \nabla_{\overline{k}} \nabla_{\overline{l}}$, where we write $\omega_\beta = \sqrt{-1} g_{\beta, i \overline{j}} dz^i \wedge d\overline{z}^j$ and $(g_{\beta}^{i \overline{j}}) = (g_{\beta, i \overline{j}})^{-1}$ (see \S 4.1 in \cite{Sz2}).
Since the operator norm of $\mathcal{D}_{\omega_\beta}^{*} \mathcal{D}_{\omega_\beta}$ is defined by
\begin{eqnarray}
\Vert  \mathcal{D}_{\omega_\beta}^{*} \mathcal{D}_{\omega_\beta} \Vert _{C^{4, \alpha}_{\eta} \to C^{0, \alpha}_{\eta}} &:=& \sup \{ \Vert  \mathcal{D}_{\omega_\beta}^{*} \mathcal{D}_{\omega_\beta} \phi \Vert _{C^{0, \alpha}_{\eta}} \hspace{3pt} |  \hspace{3pt} \Vert   \phi \Vert _{C^{4, \alpha}_{\eta}} \leq 1 \},
\end{eqnarray}
it suffices to show that
\begin{equation}
 \Vert  g_{\beta}^{i \overline{j}} - g_{0}^{i \overline{j}} \Vert _{C^{0, \alpha}_{0}} \to 0
\end{equation}
and any derivatives of $g_{\beta, i \overline{j}}$ converge to $g_{0, i \overline{j}}$ as $\beta \to 0$.
This convergence follows from the fact that
$$
\omega_{\beta} = \left( 1 - f_\beta (t) \right) \theta_X + \dot{f}_\beta (t) \sqrt{-1} \partial t \wedge \overline{\partial} t + \dol \varphi_{cscK}
$$
and any derivatives $f_{\beta}^{(k)}$ converges to $f_{0}^{(k)}$ uniformly when $\beta \to 0$.
\end{proof}

\hspace{-20pt} {\it  Proof of Theorem \ref{uniform lemma}}

Fix $\eta \in (-\kappa , 0)$.
By Proposition \ref{Sektnan}, we know that $ \mathcal{D}_{\omega_\beta}^{*} \mathcal{D}_{\omega_\beta}$ is isomorphic at $\beta = 0$.
It follows from the closed mapping theorem (see page 77 of \cite{Yo}) that there is $K_0$ satisfying 
$$\Vert  \mathcal{D}_{\omega_{0}^{cscK}}^{*} \mathcal{D}_{\omega_{0}^{cscK}} \phi \Vert _{C^{0, \alpha}_{\eta}} \geq K_0 \Vert \phi \Vert _{C^{4, \alpha}_{\eta}}, \hspace{7pt} \forall \phi \in C^{4, \alpha}_{\eta}.$$
Set $\beta_0 := \sup \{ \hspace{3pt}  \beta \geq 0  \hspace{3pt}  |  \hspace{3pt}  \exists  \hspace{3pt}  \mathcal{D}_{\omega_\beta}^{*} \mathcal{D}_{\omega_\beta}^{-1} : C^{0, \alpha}_{\eta} \to C^{4, \alpha}_{\eta} \hspace{3pt}  \}$.
From Lemma \ref{continuity}, we have $\beta_0 > 0$.  \sq

\subsection{Proof of Theorem \ref{cone to cusp}}

In this subsection, we show the map $\mathcal{N}_\beta : C^{4, \alpha}_{\eta} \to C^{4, \alpha}_{\eta}$ defined by
\begin{equation}
\mathcal{N}_\beta ( \phi_\beta ) = - L_{\omega_\beta}^{-1}( S(\omega_\beta) - \underline{S}_{\beta} + Q_{\omega_\beta} ( \phi_\beta) )
\end{equation}
has a fixed point, i.e., there exists a cscK cone metric of angle $2\pi\beta$.
In order to show this, we prove that the map $\mathcal{N}_\beta : C^{4, \alpha}_{\eta} \to C^{4, \alpha}_{\eta}$ is a contraction map.

\begin{rem}
This proof is an application of the construction of a cscK metric on the blowing-up of a cscK manifold without nontrivial holomorphic vector field by Arezzo-Pacard \cite{AP1,AP2}.
\end{rem}

Recall that $L_{\omega_{\beta}} = - \mathcal{D}_{\omega_\beta}^{*} \mathcal{D}_{\omega_\beta} + (\nabla^{1,0} S (\omega_\beta) , \nabla^{1,0} *)_{\omega_\beta}$.
Since $S(\omega_\beta)$ converges to the constant $\underline{S}$ as $\beta \to 0$, Theorem \ref{uniform lemma} implies
\begin{thm}
\label{inverse}
Assume that ${\rm Aut}_0 ((X,L_X); D)$ is trivial and $H^0 (D, TD) = 0$.
Then, there exist $\kappa > 0$ and $\beta_0 > 0$ which satisfy the following property.
For $\eta \in (-\kappa , 0)$, there exists $\hat{K} > 0$ independent of $\beta \in [0, \beta_0)$ such that
$$
\Vert  L_{\omega_\beta} \phi \Vert _{C^{0, \alpha}_{\eta}} \geq \hat{K} \Vert \phi \Vert _{C^{4, \alpha}_{\eta}}, \hspace{7pt} \forall \phi \in C^{4, \alpha}_{\eta}.
$$
Namely, the operator norm of $L_{\omega_\beta}^{-1} : C^{0, \alpha}_{\eta} \to C^{4, \alpha}_{\eta}$ is bounded by $1/\hat{K}$.
\end{thm}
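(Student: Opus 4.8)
The plan is to treat $L_{\omega_\beta}$ as a first-order perturbation of the negative Lichnerowicz operator and to reduce everything to Theorem \ref{uniform lemma}. I would write
$$L_{\omega_\beta}=-\mathcal{D}_{\omega_\beta}^{*}\mathcal{D}_{\omega_\beta}+R_\beta,\qquad R_\beta(\phi):=(\nabla^{1,0}S(\omega_\beta),\nabla^{1,0}\phi)_{\omega_\beta},$$
so that $R_\beta:C^{4,\alpha}_{\eta}\to C^{0,\alpha}_{\eta}$ is a first-order operator in $\phi$ whose only $\beta$-dependent data are the metric coefficients and the gradient $\nabla^{1,0}S(\omega_\beta)$. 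For $\eta\in(-\kappa,0)$, Theorem \ref{uniform lemma} already furnishes a constant $K>0$, independent of $\beta\in[0,\beta_0)$, with $\Vert\mathcal{D}_{\omega_\beta}^{*}\mathcal{D}_{\omega_\beta}\phi\Vert_{C^{0,\alpha}_{\eta}}\geq K\Vert\phi\Vert_{C^{4,\alpha}_{\eta}}$. Hence it is enough to prove that $\Vert R_\beta\Vert_{C^{4,\alpha}_{\eta}\to C^{0,\alpha}_{\eta}}\to 0$ as $\beta\to 0$; the conclusion then follows from the triangle inequality after shrinking $\beta_0$.

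First I would bound $R_\beta$ pointwise: since covariant differentiation with respect to the model metric preserves the weight, $\nabla^{1,0}\phi$ is controlled by $\Vert\phi\Vert_{C^{1,\alpha}_{\eta}}\leq\Vert\phi\Vert_{C^{4,\alpha}_{\eta}}$, and the uniform control of the metric coefficients from Lemma \ref{continuity} gives
$$\Vert R_\beta\phi\Vert_{C^{0,\alpha}_{\eta}}\leq C\,\Vert\nabla^{1,0}S(\omega_\beta)\Vert_{C^{0,\alpha}_{0}}\,\Vert\phi\Vert_{C^{4,\alpha}_{\eta}}$$
with $C$ independent of $\beta$. The decisive input is the smallness of the coefficient $\nabla^{1,0}S(\omega_\beta)$. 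At $\beta=0$ it vanishes identically, because $\omega_{0}^{cscK}$ is cscK and so $S(\omega_{0}^{cscK})\equiv\underline{S}$ is constant. For $\beta>0$ this smallness is not a formal consequence of Proposition \ref{the norm of scalar curvature} alone, since one now needs one more derivative of $S(\omega_\beta)-\underline{S}_\beta$ and control in the \emph{unweighted} space $C^{0,\alpha}_{0}$. I would instead use the all-order strengthenings of the estimates behind that proposition: near $D$, Lemma \ref{estimate at infinity} gives $S(\omega_\beta)-\underline{S}_\beta=O(t^{-\delta})$ at every differential order, while on $V_\beta$ Proposition \ref{le} yields convergence of the scalar curvatures together with all their derivatives, coming from the uniform convergence $f_\beta^{(k)}\to f_0^{(k)}$. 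Together these give $\Vert\nabla^{1,0}S(\omega_\beta)\Vert_{C^{0,\alpha}_{0}}=:\epsilon(\beta)\to 0$.

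To finish, I would shrink $\beta_0$ so that $C\epsilon(\beta)\leq K/2$ for every $\beta\in[0,\beta_0)$ and put $\hat{K}:=K/2$. The triangle inequality then gives
$$\Vert L_{\omega_\beta}\phi\Vert_{C^{0,\alpha}_{\eta}}\geq\Vert\mathcal{D}_{\omega_\beta}^{*}\mathcal{D}_{\omega_\beta}\phi\Vert_{C^{0,\alpha}_{\eta}}-\Vert R_\beta\phi\Vert_{C^{0,\alpha}_{\eta}}\geq\bigl(K-C\epsilon(\beta)\bigr)\Vert\phi\Vert_{C^{4,\alpha}_{\eta}}\geq\hat{K}\Vert\phi\Vert_{C^{4,\alpha}_{\eta}},$$
uniformly in $\beta\in[0,\beta_0)$. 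For the invertibility implicit in the final sentence of the statement, I would factor $L_{\omega_\beta}=-\mathcal{D}_{\omega_\beta}^{*}\mathcal{D}_{\omega_\beta}\bigl(I-(\mathcal{D}_{\omega_\beta}^{*}\mathcal{D}_{\omega_\beta})^{-1}R_\beta\bigr)$, using that $\mathcal{D}_{\omega_\beta}^{*}\mathcal{D}_{\omega_\beta}$ is invertible with inverse bounded by $1/K$ (as established in the proof of Theorem \ref{uniform lemma}); since $\Vert(\mathcal{D}_{\omega_\beta}^{*}\mathcal{D}_{\omega_\beta})^{-1}R_\beta\Vert\leq C\epsilon(\beta)/K<1$, a Neumann series shows that $L_{\omega_\beta}$ is invertible with $\Vert L_{\omega_\beta}^{-1}\Vert\leq 1/\hat{K}$.

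The main obstacle is exactly the middle step: passing from the packaged $C^{0,\alpha}_{\eta}$-bound of Proposition \ref{the norm of scalar curvature} to a genuine, $\beta$-uniform smallness of the gradient $\nabla^{1,0}S(\omega_\beta)$ in $C^{0,\alpha}_{0}$. This forces one to reopen the two regimes — the decay near $D$ from Lemma \ref{estimate at infinity} and the interior convergence on $V_\beta$ from Proposition \ref{le} — at one derivative higher, and to keep the weight bookkeeping honest, checking that a weight-$0$ coefficient times the weight-$\eta$ quantity $\nabla^{1,0}\phi$ indeed lands in $C^{0,\alpha}_{\eta}$. Everything else is soft: the reduction to the Lichnerowicz estimate and the Neumann-series invertibility are routine once $\epsilon(\beta)\to 0$ is in hand.
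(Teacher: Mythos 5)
Your proposal is correct and follows essentially the same route as the paper: the paper's own proof is the one-line remark that, writing $L_{\omega_\beta} = -\mathcal{D}_{\omega_\beta}^{*}\mathcal{D}_{\omega_\beta} + (\nabla^{1,0}S(\omega_\beta),\nabla^{1,0}\,\cdot\,)_{\omega_\beta}$, the convergence of $S(\omega_\beta)$ to the constant $\underline{S}$ makes the gradient term a vanishing perturbation of the operator controlled by Theorem \ref{uniform lemma}. Your write-up simply fills in the details the paper leaves implicit (the operator bound on the perturbation, the two-regime derivative estimates behind $\Vert\nabla^{1,0}S(\omega_\beta)\Vert \to 0$, and the Neumann series for invertibility), all consistent with the paper's supporting lemmas.
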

\begin{rem}
The constant $\hat{K}>0$ in Theorem \ref{inverse} depends on $\alpha, \kappa, \eta$ and $\delta$.
\end{rem}
We need the following lemma.
\begin{lemma}
\label{difference}
There exists $c_0 > 0$ independent of sufficiently small $\beta > 0$ such that if $\Vert  \phi \Vert _{C^{4, \alpha}_{\eta}} \leq c_0$, we have
\begin{equation}
\Vert  L_{\omega_\beta + \dol \phi} - L_{\omega_\beta} \Vert _{C^{4, \alpha}_{\eta} \to C^{0, \alpha}_{\eta}} \leq \hat{K}/2
\end{equation}
and $\omega_\beta + \dol \phi$ is positive definite.
\end{lemma}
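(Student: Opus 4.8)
The plan is to regard $\omega \mapsto L_\omega$ as a map from a small ball of potentials into the bounded operators $C^{4,\alpha}_\eta \to C^{0,\alpha}_\eta$ and to show it is Lipschitz near $\omega_\beta$, with a Lipschitz constant uniform for small $\beta$. This is the exact counterpart of Lemma \ref{continuity}: there one controls the variation of $\mathcal{D}_\omega^{*}\mathcal{D}_\omega$ in the parameter $\beta$ at $\phi = 0$; here one controls the variation in the potential direction $\phi$ at fixed $\beta$. Both assertions of the lemma (positivity and the operator bound) then follow by taking $c_0$ small enough.

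First I would establish positivity. Writing $\omega_\beta + \dol\phi = \omega_\beta(\mathrm{Id} + \omega_\beta^{-1}\dol\phi)$, it suffices to bound the endomorphism $\omega_\beta^{-1}\dol\phi$ in the sup norm by $\tfrac12$. Since $\dol\phi$ is controlled by $\|\phi\|_{C^{4,\alpha}_\eta}$ and, for $\eta<0$, the weight $t^{\eta}$ supplies decay toward $D$, choosing $c_0$ small makes $\mathrm{Id} + \omega_\beta^{-1}\dol\phi$ uniformly positive definite, hence $\omega_\beta + \dol\phi > 0$; the constant is uniform in $\beta$ provided the geometry of $\omega_\beta$ is uniformly controlled for small $\beta$, as in the proof of Lemma \ref{continuity}.

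For the operator bound I would write $L_\omega$ in the fixed quasi-coordinate charts as a fourth-order operator $L_\omega = \sum_{|I|\le 4} a_I(\omega)\,\partial^{I}$, where the coefficients $a_I(\omega)$ are universal functions, rational in the inverse metric and polynomial in the derivatives of the metric up to order two. It is cleanest to use the expanded form of the linearization, in which the coefficients depend only on the $2$-jet of $g = g_\beta + \mathrm{Hess}\,\phi$ (hence on $\phi$ up to four derivatives); this avoids differentiating $S(\omega_\beta + \dol\phi)$, which for $\phi \in C^{4,\alpha}_\eta$ is only $C^{0,\alpha}$. Once $\omega_\beta + \dol\phi$ is uniformly positive, $g^{-1}$ is uniformly bounded, so the mean value theorem gives $\|a_I(\omega_\beta + \dol\phi) - a_I(\omega_\beta)\|_{C^{0,\alpha}_0} \le C\,\|\phi\|_{C^{4,\alpha}_\eta}$ with $C$ independent of small $\beta$. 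Combining this with the algebra estimate $\|fg\|_{C^{0,\alpha}_\eta} \le C\|f\|_{C^{0,\alpha}_0}\|g\|_{C^{0,\alpha}_\eta}$ and the continuous inclusion $C^{0,\alpha}_\eta \hookrightarrow C^{0,\alpha}_0$ (valid since $\eta<0$), applied to $\psi \in C^{4,\alpha}_\eta$ with $\partial^{I}\psi \in C^{0,\alpha}_\eta$, yields $\|(L_{\omega_\beta+\dol\phi} - L_{\omega_\beta})\psi\|_{C^{0,\alpha}_\eta} \le C'\,\|\phi\|_{C^{4,\alpha}_\eta}\,\|\psi\|_{C^{4,\alpha}_\eta}$, i.e. an operator-norm bound $C'\|\phi\|_{C^{4,\alpha}_\eta}$. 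Taking $c_0 = \min\{\,c_0',\ \hat{K}/(2C')\,\}$, where $c_0'$ secures positivity, gives the desired $\hat{K}/2$.

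The hard part will be the uniformity of all these constants as $\beta \to 0$, and it is concentrated near $D$, where the background $\omega_\beta$ is a cone metric of small angle while the spaces $C^{k,\alpha}_\eta$ are modeled on the cusp (Poincar\'{e}) geometry. One must check that, in the fixed cusp quasi-coordinates, the components of $\omega_\beta$ and their derivatives stay bounded and $\omega_\beta$ remains uniformly positive for all small $\beta$, so that the universal-function and mean-value arguments run with $\beta$-independent constants. I expect this to be governed by the same mechanism underlying Proposition \ref{le} and Lemma \ref{estimate at infinity}: the uniform convergence $f_\beta^{(k)} \to f_0^{(k)}$ on $V_\beta$, the decay $\varphi_{cscK} = O(t^{-\delta})$, and the restriction $\eta \in (-\delta,0)$, with the region near $D$ treated separately from $V_\beta$ exactly as in the proof of Proposition \ref{the norm of scalar curvature}.
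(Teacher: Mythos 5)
Your proposal is correct and takes essentially the same route as the paper: the paper's entire proof consists of the identity $g_{\phi}^{-1} - g^{-1} = -g_{\phi}^{-1}(g_{\phi}-g)g^{-1}$ plus the remark that the weight $\eta$ is negative, which is exactly your perturbation argument on the coefficients of $L_{\omega}$ (rational in the inverse metric) combined with the inclusion $C^{0,\alpha}_{\eta} \hookrightarrow C^{0,\alpha}_{0}$ and the weighted algebra estimate. Your write-up is simply a more detailed implementation of that one-line argument, making explicit the positivity of $\omega_\beta + \dol\phi$, the choice of the expanded form of the linearization, and the $\beta$-uniformity near $D$ that the paper leaves implicit.
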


\begin{proof}
Let us write locally as $\omega_\beta = \sqrt{-1} g_{i \overline{j}} d z^i \wedge d \overline{z^j} $.
The Lemma follows from the simple relation;
$$
g_{\phi}^{-1} - g^{-1} = - g_{\phi}^{-1} \left( g_{\phi} - g \right) g^{-1},
$$
because the weight $\eta$ is negative.
\end{proof}

By using Theorem \ref{inverse} and Lemma \ref{difference}, we can show that $\mathcal{N}_{\beta}$ is a contraction map.

\begin{lemma}
\label{contraction}
Let $c_0$ be the constant given in Lemma \ref{difference}.
If $\Vert  \phi \Vert _{C^{4, \alpha}_{\eta}}, \Vert  \psi \Vert _{C^{4, \alpha}_{\eta}} \leq c_0$, we have
\begin{equation}
\Vert  \mathcal{N}_\beta ( \phi ) - \mathcal{N}_\beta ( \psi)  \Vert _{C^{4, \alpha}_{\eta}} \leq \frac{1}{2} \Vert  \phi - \psi \Vert _{C^{4, \alpha}_{\eta}}.
\end{equation}
\end{lemma}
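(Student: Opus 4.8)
The plan is to prove the contraction estimate for $\mathcal{N}_\beta$ by writing the difference $\mathcal{N}_\beta(\phi) - \mathcal{N}_\beta(\psi)$ as a composition of the bounded inverse $L_{\omega_\beta}^{-1}$ with the difference of the nonlinear remainders $Q_{\omega_\beta}(\phi) - Q_{\omega_\beta}(\psi)$, and then to control the latter by $\tfrac12\hat{K}\,\Vert \phi - \psi \Vert_{C^{4,\alpha}_{\eta}}$. Indeed, from the definition of $\mathcal{N}_\beta$ the two constant and linear contributions $S(\omega_\beta) - \underline{S}_\beta$ cancel in the difference, so that
\begin{equation}
\mathcal{N}_\beta(\phi) - \mathcal{N}_\beta(\psi) = - L_{\omega_\beta}^{-1}\left( Q_{\omega_\beta}(\phi) - Q_{\omega_\beta}(\psi) \right).
\end{equation}
Applying the operator-norm bound from Theorem \ref{inverse}, namely $\Vert L_{\omega_\beta}^{-1} \Vert \leq 1/\hat{K}$, reduces the whole statement to the quadratic estimate
\begin{equation}
\Vert Q_{\omega_\beta}(\phi) - Q_{\omega_\beta}(\psi) \Vert_{C^{0,\alpha}_{\eta}} \leq \frac{\hat{K}}{2} \Vert \phi - \psi \Vert_{C^{4,\alpha}_{\eta}}.
\end{equation}

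First I would make the link between $Q_{\omega_\beta}$ and the linearization explicit. Recall from the expansion (\ref{fixed point}) that $Q_{\omega_\beta}(\phi) = S(\omega_\beta + \dol\phi) - S(\omega_\beta) - L_{\omega_\beta}(\phi)$, so that for fixed $\psi$ the Gateaux derivative of $Q_{\omega_\beta}$ in the direction of $\phi$ is precisely $L_{\omega_\beta + \dol\psi} - L_{\omega_\beta}$. Writing the difference as an integral of this derivative along the segment joining $\psi$ to $\phi$,
\begin{equation}
Q_{\omega_\beta}(\phi) - Q_{\omega_\beta}(\psi) = \int_0^1 \left( L_{\omega_\beta + \dol(\psi + s(\phi - \psi))} - L_{\omega_\beta} \right)(\phi - \psi)\, ds,
\end{equation}
I can estimate the integrand using Lemma \ref{difference}. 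Provided the interpolating potentials $\psi + s(\phi - \psi)$ all have $C^{4,\alpha}_{\eta}$-norm at most $c_0$ — which holds by convexity since both endpoints do — Lemma \ref{difference} gives $\Vert L_{\omega_\beta + \dol(\psi + s(\phi - \psi))} - L_{\omega_\beta} \Vert_{C^{4,\alpha}_\eta \to C^{0,\alpha}_\eta} \leq \hat{K}/2$ for every $s \in [0,1]$, and combining with the fundamental-theorem-of-calculus representation yields the quadratic bound with the desired constant.

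The main obstacle I anticipate is the justification of the mean-value representation itself and the interchange of the integral with the weighted \hol\ norms, since $Q_{\omega_\beta}$ is a genuinely nonlinear second-order operator on a non-compact manifold with the norms measured against the weight $t^{-\eta}$. One must verify that $s \mapsto S(\omega_\beta + \dol(\psi + s(\phi-\psi)))$ is differentiable as a map into $C^{0,\alpha}_\eta$ with derivative $L_{\omega_\beta + \dol(\psi + s(\phi-\psi))}(\phi-\psi)$, which requires that each interpolating metric remain a genuine \kah\ cone metric — this is exactly the positivity assertion in Lemma \ref{difference}, so the convexity of the set $\{ \Vert \phi \Vert_{C^{4,\alpha}_\eta} \leq c_0 \}$ is doing double duty here. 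Once differentiability into the weighted space and the uniform bound on the integrand are in hand, the estimate follows directly; the negativity of $\eta$, already used in the proof of Lemma \ref{difference}, ensures the relevant error terms decay at infinity and the weighted norms stay finite.
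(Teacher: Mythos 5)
Your proposal is correct and follows essentially the same route as the paper: the same decomposition $\mathcal{N}_\beta(\phi) - \mathcal{N}_\beta(\psi) = -L_{\omega_\beta}^{-1}\left( Q_{\omega_\beta}(\phi) - Q_{\omega_\beta}(\psi) \right)$, the same identification of the derivative of $Q_{\omega_\beta}$ at an intermediate potential $\chi$ with $L_{\omega_\beta + \dol\chi} - L_{\omega_\beta}$, and the same two inputs (Theorem \ref{inverse} for $\Vert L_{\omega_\beta}^{-1}\Vert \leq 1/\hat{K}$ and Lemma \ref{difference} for the $\hat{K}/2$ bound, applicable by convexity of the ball of radius $c_0$). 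The only deviation is that you write the difference of the $Q$'s as an integral of the derivative along the segment rather than invoking, as the paper does, a pointwise mean value theorem producing a single $\chi = t\phi + (1-t)\psi$; your integral form is in fact the more careful variant, since the equality form of the mean value theorem does not hold for Banach-space-valued maps, while the paper's conclusion survives because only the norm bound, uniform along the segment, is ever used.
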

\begin{proof}

Since the operator $\mathcal{N}_\beta$ is defined by $\mathcal{N}_\beta(\phi) := - L_{\omega_{\beta}}^{-1}( S(\omega_{\beta})- \underline{S}_{\beta} + Q_{\omega_{\beta}}(\phi) )$, we have
$$
\mathcal{N}_{\beta}(\phi) - \mathcal{N}_{\beta}(\psi) = - L_{\omega_{\beta}}^{-1}( Q_{\omega_{\beta}}(\phi) - Q_{\omega_{\beta}}(\psi) ).
$$
By the mean value theorem, there exists $\chi = t \phi + (1 - t) \psi$ for $t \in [0,1]$ such that
$$
DQ_{\omega_{\beta},\chi} (\phi - \psi) = Q_{\omega_{\beta}}(\phi) - Q_{\omega_{\beta}}(\psi).
$$
Here, $DQ_{\omega_{\beta},\chi}$ denotes the derivative of $Q_{\omega_{\beta}}$ at $\chi$.
We write $\omega_\chi = \omega_\beta + \dol \chi$.
By differentiating the equation $S(\omega_\beta) + L_{\omega_\beta}(\chi + s f) + Q_{\omega_\beta} (\chi + sf)  = S(\omega_\chi + s \dol f ) = S(\omega_\chi) + s L_{\omega_\chi} f + Q_{\omega_\chi} (sf)$ at $s=0$, we obtain
$$
DQ_{\omega_{\beta},\chi} = L_{\omega_{\chi}} - L_{\omega_{\beta}}.
$$
We can easily show that  $||\chi||_{C^{4,\alpha}_{\eta}}  \leq c_{0}$.
By Theorem \ref{inverse} and Lemma \ref{difference}, we finish the proof.

\end{proof}

\hspace{-20pt} {\it Proof of Theorem \ref{cone to cusp}}

We take $\alpha \in (0,1), \delta, \kappa$ and $\eta$ as before (see Definition \ref{three conditions} and Proposition \ref{Sektnan}).
Fix a small positive number $\epsilon > 0$ so that $\delta + \eta > \epsilon$.
\begin{equation}
\mathscr{U}_\beta := \{ \phi \in C^{4, \alpha}_{\eta} \hspace{3pt} | \hspace{3pt} \Vert  \phi \Vert _{C^{4, \alpha}_{\eta}} \leq c_0 (- \log \beta )^{- \epsilon} \}
\end{equation}
Lemma \ref{contraction} implies that the map $\mathcal{N}_\beta$ is a contraction map on $\mathscr{U}_\beta$.
From (\ref{global estimate}), we can find a sufficiently small $\beta > 0$ so that
\begin{equation}
\Vert  S(\omega_\beta) - \underline{S}_{\beta} \Vert _{C^{0,\alpha}_{\eta}} \leq c_0 \hat{K} (-\log \beta)^{-\epsilon}/ 2.
\end{equation}
For $\phi \in \mathscr{U}_\beta$, we have
\begin{eqnarray}
\Vert  \mathcal{N}_\beta (\phi) \Vert _{C^{4,\alpha}_{\eta}} &\leq& \Vert  \mathcal{N}_\beta (\phi) - \mathcal{N}_\beta (0) \Vert _{C^{4,\alpha}_{\eta}} + \Vert  L_{\omega_\beta}^{-1} ( S (\omega_\beta ) -\underline{S}_{\beta} )\Vert _{C^{4,\alpha}_{\eta}}\\
&\leq& \frac{1}{2} c_0 (- \log \beta )^{- \epsilon} + \hat{K}^{-1} \Vert  S (\omega_\beta ) -\underline{S}_{\beta}\Vert _{C^{0,\alpha}_{\eta}} \\
&\leq& c_0 (- \log \beta )^{- \epsilon}.
\end{eqnarray}
So, we note that $\mathcal{N}_\beta(\mathscr{U}_\beta) \subset \mathscr{U}_\beta$.
Therefore, we can find $\phi_\beta \in \mathscr{U}_\beta$ so that $\omega_\beta + \dol \phi_\beta$ is a cscK cone metric for sufficiently small cone angle $2\pi \beta$.
Moreover, by definition of $\mathscr{U}_\beta$, the sequence of cscK cone metrics $\omega_\beta + \dol \phi_\beta$ locally converges to $\omega_{0}^{cscK}$.


\bigskip
\address{
Osaka Prefectural Abuno High School,\\
3-38-1, Himuro-chou, Takatsuki-shi,\\
Osaka, 569-1141\\
Japan
}
{takahiro.aoi.math@gmail.com}

\end{document}